\documentclass[12pt,a4paper]{tac}

\usepackage{amsmath, amssymb}

\usepackage{mathrsfs}
\usepackage{hyperref}

\usepackage{quiver}
\usepackage{tikz-cd}
\usepackage[all,2cell]{xy}

\usepackage{todonotes}

\DeclareMathAlphabet{\mathpzc}{OT1}{pzc}{m}{it}


\def\dotminus{\mathbin{\ooalign{\hss\raise1ex\hbox{.}\hss\cr
  \mathsurround=0pt$-$}}}

\def\sR{\mathsf{R}_{\mS}}

\def\ED{\mathsf{ED}}
\def\PD{\mathsf{PD}}

\def\theory{\mathbb{T}}


\newcommand{\angbr}[2]{\langle #1,#2 \rangle} 

\newcommand{\freccia}[3]{#2\colon#1 \to #3}

\newcommand{\frecciainj}[3]{\xymatrix{#2 \colon #1  \ar@{^{(}->}[r] &  #3}}

\newcommand{\duefreccia}[3]{\xymatrix@C=0.5cm{#2 \colon #1  \ar@{=>}[r] &  #3}}

\newcommand{\duemorfismo}[6]{\xymatrix@+1pc{
#1^{\op} \ar[rrd]^#2_{}="a" \ar[dd]_{#3^{\op}}\\
&& \infsl\\
#5^{\op}  \ar[rru]_#6^{}="b"
\ar_{#4}  "a";"b"}}
\newcommand{\comsquare}[8]{ \xymatrix@+1pc{ 
#1 \ar[r]^{#5} \ar[d]_{#6} & #2 \ar[d]^{#7} \\
#3 \ar[r]_{#8} & #4 
}}
\newcommand{\pullback}[8]{ \xymatrix@+1pc{ 
#1 \pullbackcorner \ar[r]^{#5} \ar[d]_{#6} & #2 \ar[d]^{#7} \\
#3 \ar[r]_{#8} & #4 
}}
\newcommand{\quadratocomm}[8]{ \xymatrix@+1pc{ 
#1 \ar[r]^{#5} \ar[d]_{#6} & #2 \ar[d]^{#7} \\
#3 \ar[r]_{#8} & #4 
}}
\newcommand{\comsquarelargo}[8]{ \xymatrix@+1pc{ 
#1 \ar[rr]^{#5} \ar[d]_{#6} && #2 \ar[d]^{#7} \\
#3 \ar[rr]_{#8} && #4 
}}
\newcommand{\parallelmorphisms}[4]{\xymatrix@+1pc{
#1 \ar @<+4pt>[r]^{#2} \ar @<-4pt>[r]_{#3} & #4
}}
\newcommand{\relation}[4]{\xymatrix@+1pc{
\angbr{#2}{#3}\colon #1 \ar @<+4pt>[r] \ar @<-4pt>[r] & #4
}}
\newcommand{\frecceparalleleopposte}[4]{\xymatrix@+1pc{
#1 \ar@<+4pt>[r]^{#2} \ar@<-4pt>@{<-}[r]_{#3} & #4
}}
\newcommand{\equalizer}[6]{\xymatrix@+1pc{
#1 \ar[r]^{#2} & #3 \ar @<+4pt>[r]^{#4} \ar @<-4pt>[r]_{#5} & #6
}}
\newcommand{\coequalizer}[6]{\xymatrix@+1pc{
 #1 \ar @<+4pt>[r]^{#2} \ar @<-4pt>[r]_{#3} & #4 \ar[r]^{#5} & #6
}}

\newcommand{\sottoggetto}[2]{\xymatrix{
#1 \ar@{>->}[r] & #2
}}

\newcommand{\pullbackcorner}[1][ul]{\save*!/#1+1.2pc/#1:(1,-1)@^{|-}\restore}


\def\pr{\pi}
\def\id{\operatorname{ id}}
\def\op{\operatorname{ op}}

\def\mA{\mathcal{A}}

\def\mC{\mathcal{C}}

\def\mD{\mathcal{D}}

\def\mG{\mathcal{G}}

\def\mN{\mathsf{Nat}}

\def\mS{\mathcal{S}}

\def\Sub{\mathsf{Sub}}


\def\infsl{\mathsf{InfSl}}

\def\set{\mathsf{Set}}
\newcommand{\cont}{\mathcal{V}}

\newcommand{\EED}{\mathsf{EED}}

\newcommand{\Excat}{\mathsf{ExCat}}
\newcommand{\regdoctrine}[1]{\mathsf{Reg}(#1)}
\newcommand{\exdoctrine}[1]{\mathsf{Ex}(#1)}
\newcommand{\Reg}{\mathsf{RegCat}}


\def\Pred#1{{\ensuremath{{\mathpzc{Prd}\kern-.4ex_{{#1}}}}}}

\newcommand{\doctrine}[2]{#2\colon #1^{\op}\longrightarrow\infsl}

\def\reg{\mathsf{reg}}
\def\ex{\mathsf{ex}}
\def\lex{\mathsf{lex}}
\newcommand{\exlex}[1]{(#1)_{\mathsf{ex}/\mathsf{lex}}}
\newcommand{\exreg}[1]{(#1)_{\mathsf{ex}/\mathsf{reg}}}
\newcommand{\reglex}[1]{(#1)_{\mathsf{reg}/\mathsf{lex}}}


\newcommand{\purecompex}[1]{{#1}^{\exists}}

\def\lang{\mathcal{L}}
\def\syntdoc{\mathsf{LT}}
\title{Quotients, pure existential completions and Arithmetic Universes}
\author{Maria Emilia Maietti and Davide Trotta}

\begin{document}

\maketitle

\begin{abstract}
We provide a new description of Joyal's arithmetic universes through a  characterization of the exact and regular completions of pure existential completions.

We show that the regular and  exact completions of  the pure existential completion  of an  elementary doctrine $P$ are equivalent to the  $\reg/\lex$ and $\ex/\lex$-completions, respectively, of the category of predicates of $P$.

This result generalizes a previous one  by the first author  with F.~Pasquali and G.~Rosolini  about doctrines equipped with Hilbert's $\epsilon$-operators.

Thanks to  this characterization,  each arithmetic universe in the sense of Joyal  can be seen  as the  exact completion of the pure existential completion of the doctrine of predicates of its Skolem theory.

  In particular,  the  initial arithmetic universe in the standard category of ZFC-sets turns out to be the completion with exact quotients of the doctrine of recursively enumerable predicates.
\end{abstract}

\begin{center}
\begin{tabular}{r}
{\it Dedicated  to Pieter Hofstra for  his  inspiring creative work.}
\end{tabular}
\end{center}

\section{Introduction}
This paper provides a new contribution to the description of Joyal's arithmetic universes  thought the application of a new characterization  of free completions of elementary Lawvere doctrines.

Free completions  of categories with quotients are ubiquitous in category theory. In particular those leading to exact and regular categories  in~\cite{SFEC,REC} have been widely studied in the literature of  category theory, with applications both
to mathematics and computer science, see~\cite{Lack1999,phdMenni,MENNI2002187,QCFF,VANDENBERG20183137,VANDENBERG2008123,maietti_maschio_2021}.

Such free completions are also involved in the construction of arithmetic universes introduced by A.~Joyal  to prove G{\"o}del incompleteness theorems in some lectures (still unpublished) in the seventies and recalled in~\cite{Joyal} (see~\cite{JAULAP,vandijk2020godel} for more information).  A more general abstract definition of arithmetic universe as list-arithmetic pretopos has been proposed  in~\cite{JAULAP}.

In more detail,  an arithmetic universe in the sense of  Joyal  can be described as the exact completion -- in the sense of~\cite{SFEC} -- of the  lex category of predicates of a given Skolem theory. In particular, the initial arithmetic universe within the standard category of ZFC-sets turns out to be the exact completion of the  lex category of primitive recursive predicates. 
In recent years, the regular and exact completions of a lex category have been proved to be instances of more general completions of certain Lawvere doctrines in \cite{QCFF,EQC,UEC}.

More precisely, the exact completion of a lex category~\cite{SFEC,REC}, also known as $\ex/\lex$-completion, and the exact completion of a regular category, referred to as $\ex/\reg$-completion,  have been proven to be an instance of a more general  exact completion  $\exdoctrine{P}$  relative to an  elementary, pure existential doctrine $P$.

The construction of the exact category $\exdoctrine{P}$  is, essentially, achieved through the tripos-to-topos construction developed by J.M.E.~Hyland, P.T.~Johnstone and A.M.~Pitts in~\cite{TT,TTT}, applied to an elementary existential doctrine.
Similarly, the regular completion of a lex category, also called $\reg/\lex$-completion,  introduced in~\cite{FECLEO,REC} has been proved to be a specific instance  of a more general construction for doctrines in~\cite{TECH}, which is the regular completion $\regdoctrine{P}$ of an elementary, pure  existential doctrine $P$.  
The regular and exact completions of doctrines are related via the $\ex/\reg$-completion,  namely we have that  $\exdoctrine{P}\equiv \exreg{\regdoctrine{P}}$~\cite{UEC}.

In this paper we provide a characterization which further relates the exact  and regular completions  of a lex category 
with the notion of  exact completion  of an  elementary existential doctrine  as presented in~\cite{QCFF,EQC,UEC} by involving
a third kind of free completion, namely the \emph{pure existential completion} of an elementary doctrine.

The notion of \emph{existential completion}  was introduced by the second author in his PhD thesis and later published in~\cite{ECRT}, and it is a construction that freely adds existential quantifiers to a given primary doctrine, along with a class $\Lambda$ of base morphisms that is closed under compositions, pullbacks and isomorphisms. To distinguish some particularly interesting instances, this free construction has been renamed  as {\it generalized existential completion} in~\cite{maietti_trotta}.
Following the terminology in~\cite{maietti_trotta}, the \emph{pure existential completion} is
the instance of the generalized existential completion  where the class $\Lambda$ consists of product projections,
   while  we call \emph{full existential completion}  the instance  where $\Lambda$ is the class of all the base morphisms.
Furthermore, the pure existential completion  also coincides with the restriction to faithful fibrations of the \emph{simple coproduct (or sum) completion} of a fibration employed by P.~Hofstra in~\cite{hofstra2011}, while
 the full existential completion coincides with the $\exists$-completion introduced  by  J.~Frey in~\cite{Frey2020,Frey2014AFS} and a particular case of it has  also  played a significant role in the works by P.~Hofstra~\cite{Hofstra06,PHDHofstra}.

In this work, we show that the regular and  exact completions of  the pure existential completion  of an  elementary doctrine $P$ are equivalent to the $\reg/\lex$ and $\ex/\lex$-completions, respectively, of the category of predicates of $P$.

In detail, we show that  for an elementary, pure existential doctrine $P$ and an elementary subdoctrine $P'$ of $P$ on the same base category,
the regular completion $\regdoctrine{P}$ of $P$ corresponds to the $\reg/\lex$-completion  $\reglex{\Pred{P'}}$  of the category of predicates $\Pred{P'}$ of $P'$
(via an equivalence  induced by the canonical embedding of   $\Pred{P'}$ into $\regdoctrine{P}$) 
 if and only if $P$ is the pure existential completion of $P'$ (Theorem~\ref{thm_main_1}).  
 
 Then,  by combining this result with the aforementioned decomposition of exact completions, we immediately deduce 
 that the exact completion $\exdoctrine{P}$ of $P$ corresponds to the  $\ex/\lex$-completion  $ \exlex{\Pred{P'}}$  of the category of predicates $\Pred{P'}$ of $P'$ 
(again via an equivalence induced by the canonical embedding of   $\Pred{P'}$ into $\exdoctrine{P}$) 
  if and only if $P$ is the pure existential completion of $P'$ (Corollary~\ref{cor_ex_comp_main_2}).

 The crucial intuition is that there is a tight connection between \emph{regular projective objects} of $\regdoctrine{P}$ and \emph{pure existential free elements} of  $P$. In particular, we show that whenever $P$ is the pure existential completion of $P'$ (and hence the elements of $P'$ are pure existential free elements according to~\cite{maietti_trotta}), we can use $P'$ to define a projective cover for $\regdoctrine{P}$  which in addition satisfies the property that  every object of $\regdoctrine{P}$ can be embedded into a projective of this  cover.

Our  characterization generalizes a previous one in~\cite{TECH}  by the first author  with F.~Pasquali and G.~Rosolini  about doctrines equipped with Hilbert's $\epsilon$-operators 
for the fact that  in~\cite{maietti_trotta} we showed that  a doctrine is equipped with Hilbert's $\epsilon$-operators if and only if it is equivalent to the pure existential completion of itself.

Then, we apply our characterization to deduce that each arithmetic universe turns out to be the  exact completion  -- in the sense  of~\cite{UEC} --  of the pure existential completion of the doctrine of predicates of a given Skolem theory.
As a consequence we deduce that the  initial arithmetic universe in the standard category of ZFC-sets is the completion with exact quotients of the doctrine of recursively enumerable
predicates.

Another notable application of our characterization,  already published in~\cite{trotta23TCS} by  employing our main theorem first presented in~\cite{maietti_trotta_arxiv} (with a different proof), regards the exact completion of G\"odel hyperdoctrines introduced in \cite{trotta23TCS,trotta2022}  as an equivalent presentation of the restriction to faithful fibrations of Hofstra's Dialectica fibrations~\cite{hofstra2011}.

\section{Elementary and existential doctrines}
The term {\it doctrine}, when accompanied by certain adjectives, is often associated with a generalization of the concept of {\it hyperdoctrine} introduced by F.W.~Lawvere in a series
of seminal papers \cite{AF,EHCSAF,DACCC}. We recall from \textit{loc. cit.}
some definitions which will be useful in the following. The reader can
find all the details about the theory of elementary and pure existential
doctrines also in \cite{EQC,QCFF,UEC,TECH}, and an algebraic analysis of the elementary structure of a doctrine in \cite{EMMENEGGER2020}.

\begin{definition}\label{def primary doctrine}
A \textbf{primary doctrine}~is a functor
$\doctrine{\mC}{P}$ from the opposite of a category $\mC$ with finite products to the category $\infsl$ of inf-semilattices.
\end{definition}
We will use the notation $\alpha\wedge \beta$ to denote the binary inf of $\alpha$ and $\beta$ in $P(A)$ and $\top_A$ to denote the top element of $P(A)$.

\begin{definition}\label{def elementary doctrine}
A primary doctrine $\doctrine{\mC}{P}$   is \textbf{elementary} if for every $A$ in $\mC$ there exists an object $\delta_A$ in $P(A\times A)$, called \textbf{fibered equality}, such that
\begin{enumerate}
\item the assignment
\[\exists_{\angbr{\id_A}{\id_A}}(\alpha):=P_{\pr_1}(\alpha)\wedge \delta_A\]
for an element $\alpha$ of $P(A)$ determines a left adjoint to $\freccia{P(A\times A)}{P_{\angbr{\id_A}{\id_A}}}{P(A)}$;
\item for every morphism $e$ of the form $\freccia{X\times A}{\angbr{\pr_1,\pr_2}{\pr_2}}{X\times A\times A}$ in $\mC$, the assignment
\[ \exists_{e}(\alpha):= P_{\angbr{\pr_1}{\pr_2}}(\alpha)\wedge P_{\angbr{\pr_2}{\pr_3}}(\delta_A)\]
for $\alpha$ in $P(X\times A)$ determines a left adjoint to $\freccia{P(X\times A \times A)}{P_e}{P(X\times A)}$.
\end{enumerate}
\end{definition}
\begin{example}\label{example horn fragment}
    Let $\lang_{=}$ be the $(\top,\wedge,=)$-fragment of  Intuitionistic Logic, i.e. the fragment with true constant, conjunctions and equality, called Horn-fragment in \cite[Sec. D.1.1, p. 810]{SAE}. Let  $\theory$ be a theory in such a fragment.  Let us denote by $\cont$ the syntactic category whose objects are contexts (up to $\alpha$-equivalence), and arrows are term substitutions. Consider the functor
    \[\doctrine{\cont}{\syntdoc_{=}^{\theory}}\]
     defined on a given context $\Gamma$  of  $\cont$ by taking $\syntdoc_{=}^{\theory}(\Gamma)$ as the Lindenbaum-Tarski algebra of well-formed formulas
    of $\lang_{=}$ with free variables in $\Gamma$  and on a substitution morphism between contexts by taking  the substitution homomorphism between  formulas of
    the Lindenbaum-Tarski algebras. The functor $\syntdoc_{=}^{\theory}$ is an elementary doctrine.

\end{example}

\begin{definition}\label{def pure existential doctrine}
  A primary doctrine $\doctrine{\mC}{P}$ is \textbf{pure existential} if, for every object $A$ and $B$ in $\mC$ for any product projection $\freccia{A\times B}{\pr_A}{A}$, the functor
  \[ \freccia{P(A)}{{P_{\pr_A}}}{P(A\times B)}\]
  has a left adjoint $\exists_{\pr_A}$, and these satisfy:
    \begin{enumerate}
    \item[(BCC)] \textbf{Beck-Chevalley condition:} for any pullback diagram
    \[\begin{tikzcd}[column sep=large, row sep=large]
        {C\times B} & {C} \\
        A\times B & A
        \arrow["{f\times \id_B}"', from=1-1, to=2-1]
        \arrow["\pr_A"', from=2-1, to=2-2]
        \arrow["{\pr_C}", from=1-1, to=1-2]
        \arrow["f", from=1-2, to=2-2]
        \arrow["\scalebox{1.6}{$\lrcorner$}"{anchor=center, pos=0.1}, shift left=3, draw=none, from=1-1, to=2-1]\end{tikzcd}\]
    the canonical arrow
    \[ \exists_{\pr_C}P_{f\times_{\id_B}}(\alpha)\leq P_f \exists_{\pr_A}(\alpha)\]
    
    is an isomorphism for every element $\alpha$ of the fibre $P(A\times B )$;
    \item[(FR)] \textbf{Frobenius reciprocity:} for any projection $\freccia{A\times B}{{\pr_A}}{A}$, for any object $\alpha$ in $P(A)$ and $\beta$ in $P(X\times A)$, the canonical arrow
    \[ \exists_{\pr_A}(P_{\pr_A}(\alpha)\wedge \beta)\leq \alpha \wedge \exists_{\pr_A}(\beta)\]
    in $P(A)$ is an isomorphism.
  \end{enumerate}
\end{definition}

\textbf{Notation:} in this work, given two primary doctrines $P$ and $P'$ on the same base category $\mC$, we will say that $P'$ is a \textbf{subdoctrine} of $P$ if $P'(X)$ is  a sub-inf-semilattice of $P(X)$ for every object $X$ of $\mC$, and if, for every arrow $\freccia{Y}{f}{X}$, the action of $P_f$ and $P_f'$ is the same (on the object of $P'(X)$). Moreover, we say that $P'$ is an \textbf{elementary subdoctrine} of $P$ whenever both  doctrines are elementary and also the fibred equality of $P'$ coincides with that of $P$.

\begin{example}\label{example regular fragment}
    Let $\lang_{=,\exists}$ be the $(\top,\wedge,=,\exists)$-fragment of first-order Intuitionistic Logic (also called \emph{regular} in~\cite[Sec. D1.3]{SAE}, see remark~\ref{regularlogicwrongname}), i.e. the fragment with the true constant, conjunction, equality and existential quantifiers. Let $\theory$ be a theory of such a fragment. 
    Consider the syntactic doctrine 
    $$\doctrine{\cont}{\syntdoc_{=,\exists}^{\theory}}$$
    where $\cont$ is the category of contexts and substitutions and $\syntdoc_{=,\exists}^{\theory}(\Gamma)$ is  given by the Lindenbaum-Tarski algebra of well-formed formulas
    of $\lang_{=,\exists }$ with free variables in $\Gamma$  as in Example~\ref{example horn fragment}.  The  doctrine $\syntdoc_{=,\exists}^{\theory}$  is elementary and pure existential.
\end{example}

\begin{remark}\label{rem left adjoint elementary pure existential doc}
    In a pure existential elementary doctrine, for every arrow $\freccia{A}{f}{B}$ of $\mC$ the functor $P_f$ has a left adjoint $\exists_f$ that can be computed as
    \[ \exists_{\pr_2}(P_{f\times {\id}_B}(\delta_B)\wedge P_{\pr_1}(\alpha))\]
    for $\alpha$ in $P(A)$, where $\pr_1$ and $\pr_2$ are the projections from $A\times B$.
    However, observe that such a  definition  guarantees only the validity of the corresponding Frobenius reciprocity condition for $\exists_f$, but it does not guarantee the validity of the  Beck-Chevalley condition with respect to pullbacks along $f$ (see the counterexample in \cite[Rem. 6.4]{maietti_trotta}). In particular, primary doctrines, whose base category has finite limits, having left adjoints along every morphisms satisfying BCC and FR are called \emph{full existential} in \cite{maietti_trotta}.
\end{remark}

The following examples are discussed in \cite{AF,FSF}.

\begin{example}\label{example subobjects doctrine}
 Let $\mC$ be a category with finite limits. The subobject functor $\doctrine{\mC}{{\Sub_{\mC}}}$ is an elementary doctrine. Moreover,  it is pure existential if and only if the category $\mC$ is regular.
\end{example}
\begin{example}\label{example weak subobjects doctrine}
Let $\mD$ be a category with finite products and weak pullbacks. The weak subobjects (or variations) functor $\doctrine{\mD}{{\Psi_{\mD}}}$, assigning to an object $A$ the poset reflection of the slice category $\mD/A$, is an elementary and pure existential doctrine (left adjoints are given by the post-composition). Moreover, we know from \cite{maietti_trotta} that  every weak subobject doctrines is a full existential completion (and that every element of the fibre can be written as an existential quantifier of a top element).
\end{example}

The category of  primary doctrines $\PD$ is a 2-category, and we refer to \cite{QCFF,EQC} for a complete description of the 1-cells and 2-cells of this 2-category.
We denote by $\ED$ the 2-full subcategory of $\PD$ whose objects are pure existential doctrines, and whose 1-cells are those 1-cells of $\PD$ which preserve the pure existential structure. Similarly, we denote by $\EED$ the 2-full subcategory of $\PD$ whose objects are elementary and pure existential doctrines, and whose 1-cells preserve both the pure existential and the elementary structure.

We conclude this section recalling from \cite{EHCSAF,EQC,QCFF} the   \textbf{Grothendieck category} and the \textbf{category of predicates}  of an elementary doctrine.

\begin{definition}\label{def comprehension comp}
  Given a primary doctrine $\doctrine{\mC}{P}$,  its \textbf{Grothendieck category} $\mG_P$ is defined as follows:
  \begin{itemize}
  \item an object of $\mG_P$ is a pair $(A,\alpha)$ where $A$ is a set and $\alpha\in P(A)$;
  \item an arrow $\freccia{(A,\alpha)}{f}{(B,\beta)}$ is an arrow $\freccia{A}{f}{B}$ of $\mC$ such that $\alpha\leq P_f(\beta)$.
  \end{itemize}
  \end{definition}
 We just remind that the Grothendieck category $\mG_P$  of $P$ is the base of the free completion adding \emph{comprehensions} to $P$ as shown in \cite{EQC,UEC}.

  \begin{definition}\label{def pred cat}
    Given an elementary doctrine $\doctrine{\mC}{P}$,  its \textbf{category of predicates} $\Pred{P}$ is defined as follows:
    \begin{itemize}
    \item an object of $\Pred{P}$  is a pair $(A,\alpha)$ where $A$ is a set and $\alpha\in P(A)$;
    \item an arrow $\freccia{(A,\alpha)}{[f]}{(B,\beta)}$ of $\Pred{P}$ is an equivalence class of arrows $(A,\alpha)\to (B,\beta)$  of $\mG_P$ with respect to the following equivalence relation: $f\sim g$ when $\alpha\leq P_{\angbr{f}{g}}(\delta_B)$.
    \end{itemize}
    \end{definition}
 We just remind  that  the category of predicates  $\Pred{P}$  of $P$ is the base of the free completion adding  an extensional equality, formally named  {\it comprehensive diagonals} (see \cite[Def. 5.2]{EQC}),  to the free completion of  $P$  with comprehensions, as shown in \cite{EQC}.

    \begin{remark}\label{rem_finite_limit_pred}
      Notice that the category of predicates $\Pred{P}$ of an elementary doctrine has always finite limits. We refer to \cite[Prop.4.15]{QCFF} or \cite[Rem. 2.14]{TECH} for the explicit description of the pullbacks in $\Pred{P}$, being the base of an elementary doctrine with full comprehensions and comprehensive diagonals. The name {\it category of predicates} in \cite{TECH}  was inspired by Joyal's category of predicates in \cite{JAULAP} (see \cite[Ex. 2.18]{TECH}) which we will recall in the last section.
      \end{remark}

\section{The pure existential completion}
In \cite{ECRT} the second author introduced a free construction, called \emph{existential completion}, that freely adds left adjoints along a given class of morphisms $\Lambda$ (closed under pullbacks, compositions and isomorphisms) to a given primary doctrine.  Such a notion has been renamed {\it generalized existential completion} in \cite{maietti_trotta} to distinguish some of its relevant instances. Following the terminology in \cite{maietti_trotta}, the \emph{pure existential completion} is
the instance of the generalized existential completion  where the class $\Lambda$ consists of product projections.

In \cite{maietti_trotta} we provided an intrinsic characterization of generalized existential completions through the notion of {\it existential free elements} with respect to $\Lambda$.  Here, we present a further version of this characterization (Theorem~\ref{theorem caract. gen. pure existential completion}) only for the pure existential completion (but it can be smoothly extended to all generalized existential completions) by introducing  the notion of \emph{pure existential free  elements  of an existential doctrine $P$ relative to a  subdoctrine $P'$} which slightly generalizes  the notion of \emph{pure existential free  objects  of a doctrine $P$}. This result will be relevant in the proof of Theorem~\ref{thm_main_1}.

Since we will mainly perform our calculations on pure existential completions by just referring to their intrinsic characterizations, here we do not recall the original construction from \cite{ECRT}. We just remind  from \cite{ECRT} that the pure existential completion  $\doctrine{\mC}{\purecompex{P }}$ of a given primary doctrine $\doctrine{\mC}{P}$  provides  a 2-adjunction
\[\begin{tikzcd}
	\PD && \ED
	\arrow[""{name=0, anchor=center, inner sep=0},"(-)^{\exists}", curve={height=-18pt}, from=1-1, to=1-3]
	\arrow[""{name=1, anchor=center, inner sep=0},hook', curve={height=-18pt}, from=1-3, to=1-1]
	\arrow["\dashv"{anchor=center, rotate=-90}, draw=none, from=0, to=1]
\end{tikzcd}\]
from the 2-category $\PD$ of primary doctrines into the 2-category $ \ED$ of pure existential doctrines \cite{ECRT} and this justifies its name. Moreover, the pure existential completion preserves the elementary structure as shown in \cite{ECRT} and this is necessarily so as shown in \cite[Thm. 6.1]{maietti_trotta}.

Now, we start by giving the main definitions necessary for  the intrinsic characterization of  the pure existential completion.

For the rest of this section, let $\doctrine{\mC}{P}$ be a fixed pure existential doctrine, and let $\doctrine{\mC}{P'}$ be a fixed subdoctrine of $P$. 
    
    \begin{definition}\label{definition pure ex splitting}\label{definition pure ex free}
    An element $\alpha$ of the fibre $P(A)$ is said to be a \textbf{pure existential splitting} if 
    for every projection $\freccia{A\times B}{\pr_A}{A}$  and for every element $\beta$ of the fibre $P(A\times B)$, whenever $ \alpha = \exists_{\pr_A}(\beta)$ holds
    then there exists an arrow $\freccia{A}{h}{B}$ such that
    $   \alpha = P_{\angbr{\id_A}{h}}(\beta) $. Moreover, $\alpha$ is said to be \textbf{pure existential free} if for every morphism $\freccia{B}{f}{A}$, $P_f(\alpha)$ is  a pure existential splitting.
    \end{definition}
In the following proposition we recall from \cite[Prop. 4.4]{maietti_trotta} a useful equivalent characterization of pure existential splitting elements that is a strengthening of the usual Existence Property:
    \begin{proposition} \label{proposition:existential free elements}
Let $\alpha$ be an element of the fibre $P(A)$. Then  $\alpha$ is pure existential splitting if and only if for every projection $\freccia{A\times B}{\pr_A}{A}$ 
  and for every element $\beta$ of the fibre $P(A\times B)$, whenever $ \alpha \leq \exists_{\pr_A}(\beta)$ holds   then there exists an arrow $\freccia{A}{h}{B}$ such that   $   \alpha \leq P_{\angbr{\id_A}{h}}(\beta) $.

\end{proposition}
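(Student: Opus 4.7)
The plan is to prove the two directions separately, using Frobenius reciprocity for the nontrivial implication and the unit of the adjunction $\exists_{\pi_A} \dashv P_{\pi_A}$ for the easy one.

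For the implication $(1) \Rightarrow (2)$, suppose $\alpha$ is pure existential splitting and that $\alpha \leq \exists_{\pi_A}(\beta)$ for some $\beta \in P(A\times B)$. The idea is to replace $\beta$ by a smaller predicate $\beta' := \beta \wedge P_{\pi_A}(\alpha)$ whose existential quantification coincides exactly with $\alpha$. Applying Frobenius reciprocity to $\pi_A$ gives
\[
\exists_{\pi_A}(\beta') \;=\; \exists_{\pi_A}\bigl(\beta \wedge P_{\pi_A}(\alpha)\bigr) \;=\; \alpha \wedge \exists_{\pi_A}(\beta) \;=\; \alpha,
\]
where the last equality uses the hypothesis $\alpha \leq \exists_{\pi_A}(\beta)$. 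By the splitting hypothesis applied to $\beta'$, there exists $h\colon A \to B$ such that $\alpha = P_{\langle \id_A,h\rangle}(\beta')$. Since $\pi_A \circ \langle \id_A,h\rangle = \id_A$, functoriality of $P$ gives $P_{\langle \id_A,h\rangle}(P_{\pi_A}(\alpha)) = \alpha$, so
\[
\alpha \;=\; P_{\langle \id_A,h\rangle}(\beta) \wedge \alpha,
\]
which yields $\alpha \leq P_{\langle \id_A,h\rangle}(\beta)$, as required.

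For the converse $(2) \Rightarrow (1)$, suppose that the ``inequality'' version holds and that $\alpha = \exists_{\pi_A}(\beta)$. Then in particular $\alpha \leq \exists_{\pi_A}(\beta)$, so by hypothesis there is some $h\colon A\to B$ with $\alpha \leq P_{\langle \id_A,h\rangle}(\beta)$. The reverse inequality comes for free from the unit of the adjunction $\exists_{\pi_A}\dashv P_{\pi_A}$: starting from $\beta \leq P_{\pi_A}(\exists_{\pi_A}(\beta))$ and reindexing along $\langle \id_A,h\rangle$ (using again $\pi_A \circ \langle \id_A,h\rangle = \id_A$) gives $P_{\langle \id_A,h\rangle}(\beta) \leq \exists_{\pi_A}(\beta) = \alpha$. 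Combining both inequalities yields $\alpha = P_{\langle \id_A,h\rangle}(\beta)$, which is precisely the splitting condition.

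The main obstacle is essentially cosmetic: one must verify that the replacement trick in the first implication produces a predicate $\beta'$ whose existential is \emph{exactly} $\alpha$ (not merely $\leq \alpha$), which is exactly where Frobenius reciprocity is indispensable. Once this point is handled, both directions reduce to a routine manipulation of unit/counit of the adjunction along the section $\langle \id_A,h\rangle$ of $\pi_A$.
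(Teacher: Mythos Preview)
Your proof is correct. The paper itself does not supply a proof of this proposition: it merely recalls the statement from \cite[Prop.~4.4]{maietti_trotta}. The argument you give---Frobenius reciprocity to turn an inequality into an equality for $(1)\Rightarrow(2)$, and the unit of the adjunction $\exists_{\pi_A}\dashv P_{\pi_A}$ pulled back along the section $\langle \id_A,h\rangle$ for $(2)\Rightarrow(1)$---is the standard one and is exactly what one would expect to find in the cited reference.
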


    \begin{definition}\label{definition RC}
        $P$ satisfies the \textbf{Rule of Choice}, for short {\bf  (RC)},  if whenever
        $\top_A\leq \exists_{\pr_A} (\beta) $
        there exists an arrow $\freccia{A}{h}{B}$ such that
        $\top_A\leq P_{\angbr{\id_A}{h}}(\beta)$.
        
        \end{definition}
        
        \begin{remark}\label{rem_RC_and top_ex_splitting}
        Observe that $P$ satisfies (RC) if and only if for every object $A$ of $\mC$, the element $\top_A\in P(A)$ is a pure existential splitting. 
        \end{remark}
           \begin{definition}
We say that an element $\alpha $ of the fibre $ P(A)$ is \textbf{covered} by an element $\beta\in P(A\times B)$ if  $\alpha=\exists_{\pr_A}(\beta)$.
\end{definition}

\begin{definition}\label{def enough pure existential free}
  We say that $P$ has \textbf{enough pure existential free elements} if for every object $A$ of $\mC$, any element $\alpha\in P(A)$ is covered by some pure existential free element $\beta\in P(A\times B)$ for some object $B$ of $\mC$.   
\end{definition}

\begin{remark}\label{enough_PEF_implies_every_PES_is_PEF}
  Observe that  if all elements of  a subdoctrine $P'$ 
  of a pure existential doctrine $P$
  are pure existential splitting for $P$  then they are also pure existential free for $P$, being the doctrine $P'$ closed under re-indexing.
  It also holds that,   if a doctrine    has enough pure existential free elements then every pure existential splitting is pure existential free. We refer to \cite[Lem. 4.11]{maietti_trotta} for a proof of this fact.
\end{remark}
       
\begin{definition}\label{def:lambda existential cover}
  We say that $P'$ is  a \textbf{pure existential cover} of $P$ if  for any object $A$ of $\mC$, every element $\alpha'$ of $P'(A)$ is a pure existential splitting for $P$ (and hence pure existential free) and  every element  $\alpha$ of $P(A)$ is  covered by an element of $P'$.
\end{definition}

We summarize in the following proposition some useful properties, and refer to \cite{maietti_trotta} for all the details:

\begin{lemma}\label{lemma_cover_P_and_ex_excomp}
The following two results hold:
  \begin{itemize}
    \item if $P$ is the pure existential completion of a primary doctrine $P'$ then $P'$ is a pure existential cover of $P$;
    \item if $P'$ is a pure existential cover of $P$, then the existential free elements of $P$ coincides exactly with the elements of $P'$. Hence, if a pure existential cover exists, it is unique.
  \end{itemize}
\end{lemma}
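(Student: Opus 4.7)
The plan is to prove the two bullets by directly unwinding the explicit construction of the pure existential completion $\purecompex{P'}$ together with the characterization of pure existential splitting elements provided by Proposition~\ref{proposition:existential free elements}.

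For the first bullet, I would consider the canonical embedding $\iota\colon P' \hookrightarrow \purecompex{P'} = P$, which sends an object $\alpha'\in P'(A)$ (up to the canonical identification $A \cong A\times \mathbf{1}$) to the class of $(A, \mathbf{1}, \alpha')$, and check that every $\iota(\alpha')$ is pure existential splitting in $P$. Concretely, given any $\beta\in P(A\times B)$ represented by a triple $(A\times B, C, \gamma\in P'(A\times B\times C))$, the element $\exists_{\pr_A}(\beta)$ in $\purecompex{P'}$ is, by construction, represented by $(A, B\times C, \gamma)$ up to suitable reindexing. An inequality $\iota(\alpha') \leq \exists_{\pr_A}(\beta)$ therefore unwinds to the existence of an arrow $\langle h_1, h_2\rangle\colon A \to B\times C$ with $\alpha' \leq P'_{\langle \id_A, h_1, h_2\rangle}(\gamma)$; setting $h := h_1$ yields $\iota(\alpha') \leq \purecompex{P'}_{\langle \id_A, h\rangle}(\beta)$, and Proposition~\ref{proposition:existential free elements} concludes that $\iota(\alpha')$ is pure existential splitting. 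Since $P'$ is closed under reindexing (being a full subdoctrine), it is in fact pure existential free. Finally, every representative $(A,B,\alpha\in P'(A\times B))$ of an element of $\purecompex{P'}(A)$ is by construction $\exists_{\pr_A}(\iota(\alpha))$, so each element of $P(A)$ is covered by an element of $P'$, showing that $P'$ is a pure existential cover of $P$.

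For the second bullet, let $\alpha\in P(A)$ be pure existential free. By the cover condition there exists $\beta\in P'(A\times B)$ with $\alpha = \exists_{\pr_A}(\beta)$. Applying Definition~\ref{definition pure ex splitting} to this equality produces an arrow $h\colon A \to B$ with $\alpha = P_{\langle \id_A, h\rangle}(\beta)$; since $P'$ is a full subdoctrine, it is closed under reindexing, so the right-hand side lies in $P'(A)$, hence $\alpha \in P'(A)$. Conversely, the cover condition itself asserts that every element of $P'$ is pure existential splitting, and by Remark~\ref{enough_PEF_implies_every_PES_is_PEF} it is then pure existential free. This identifies the class of pure existential free elements of $P$ with $P'(A)$; uniqueness of the cover follows immediately since both candidate covers would coincide with this intrinsic invariant of $P$.

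The main technical obstacle will be the careful bookkeeping of projections and pairings involved in writing down the formula for $\exists_{\pr_A}$ and the preorder inequalities in the fibres of $\purecompex{P'}$. In particular, one needs to verify that the witness $\langle h_1, h_2\rangle\colon A \to B\times C$ produced by an inequality in $\purecompex{P'}$ really does project to a witness $h_1\colon A\to B$ for the reindexing $P_{\langle \id_A, h_1\rangle}(\beta)$ after repackaging $\beta$ as a single element of $\purecompex{P'}$. Apart from this diagrammatic check, the remaining arguments are purely formal consequences of the definitions.
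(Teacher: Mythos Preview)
Your proposal is correct. The paper does not actually give its own proof of this lemma but defers to \cite{maietti_trotta}; your direct unwinding of the construction of $\purecompex{P'}$ together with Proposition~\ref{proposition:existential free elements} and Remark~\ref{enough_PEF_implies_every_PES_is_PEF} is the expected argument, and the bookkeeping you flag (the explicit formula for $\exists_{\pr_A}$ in $\purecompex{P'}$ and the matching of witnesses under reindexing) is indeed routine once written out.
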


The previous notions can be generalized by relativizing each concept to a given subdoctrine:
 \begin{definition}\label{definition pure ex splitting rel to P}\label{definition pure ex free rel to P}
  An object $\alpha$ of the fibre $P'(A)$ is said to be a \textbf{pure existential splitting of $P$ relative to} $P'$ if 
  for every projection $\freccia{A\times B}{\pr_A}{A}$  and for every element $\beta$ of the fibre $P'(A\times B)$, whenever $ \alpha = \exists_{\pr_A}(\beta)$ holds in $P(A)$
  then there exists an arrow $\freccia{A}{h}{B}$ such that
  $   \alpha = P_{\angbr{\id_A}{h}}(\beta) $. Moreover, $\alpha$ is said to be \textbf{pure existential free of $P$ relative to} $P'$ if $P_f(\alpha)$ is  a pure existential splitting of $P$ relative to $P'$ for every morphism $\freccia{B}{f}{A}$.
\end{definition}

\begin{definition}\label{def:relative existential cover}
   We say that $P'$ is  a \textbf{pure existential relative cover of $P$}  if  for any object $A$, every element $\alpha'$ of $P'(A)$ is a pure existential splitting element of $P$ relative to $P'$ and  every element  $\alpha$ of $P(A)$ is  covered by an element of $P'$.
\end{definition}

\begin{lemma}\label{lemma_relative_splitting_and_cover_implies_splitting}
  If every element of $P$ is covered by an element of $P'$,  then every pure existential splitting of $P$ relative to $P'$ is a pure existential splitting. 
\end{lemma}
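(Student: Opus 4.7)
The plan is to fix an $\alpha\in P'(A)$ that is a pure existential splitting of $P$ relative to $P'$, fix a witness $\beta\in P(A\times B)$ of the equation $\alpha=\exists_{\pr_A}(\beta)$ in $P(A)$, and construct from these data an arrow $h\colon A\to B$ in $\mC$ satisfying $\alpha=P_{\langle\id_A,h\rangle}(\beta)$.

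The first move is to exploit the hypothesis that every element of $P$ is covered by an element of $P'$: applied to $\beta$, this produces an object $C$ of $\mC$ and an element $\beta'\in P'(A\times B\times C)$ with $\beta=\exists_{q}(\beta')$, where $q\colon A\times B\times C\to A\times B$ is the canonical projection. Since left adjoints compose, we obtain
\[\alpha=\exists_{\pr_A}\exists_{q}(\beta')=\exists_{\pr_A\circ q}(\beta')=\exists_{\rho}(\beta'),\]
where $\rho\colon A\times B\times C\to A$ is the projection onto the first factor. Identifying $A\times B\times C$ with $A\times(B\times C)$, the arrow $\rho$ is a product projection and $\beta'$ lies in $P'(A\times(B\times C))$, so the relative splitting hypothesis applies and supplies an arrow $k\colon A\to B\times C$ with $\alpha=P_{\langle \id_A,k\rangle}(\beta')$. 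Writing $k=\langle h,l\rangle$ with $h\colon A\to B$ and $l\colon A\to C$, we propose $h$ as the splitting arrow for $\beta$.

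The last step is to verify the equality $\alpha=P_{\langle\id_A,h\rangle}(\beta)$, using only the functoriality of $P$ and the units of the adjunctions $\exists_{q}\dashv P_{q}$ and $\exists_{\pr_A}\dashv P_{\pr_A}$; no Beck--Chevalley or Frobenius step is needed. Concretely, I would factor $\langle\id_A,k\rangle$ as $v\circ\langle\id_A,h\rangle$, where $v:=\langle\id_{A\times B},\, l\,\pr_A\rangle\colon A\times B\to A\times B\times C$ satisfies $q\circ v=\id_{A\times B}$. Reindexing the unit $\beta'\leq P_{q}(\beta)$ along $v$ gives $P_v(\beta')\leq \beta$, whence
\[\alpha=P_{\langle\id_A,h\rangle}(P_v(\beta'))\leq P_{\langle\id_A,h\rangle}(\beta).\]
Conversely, reindexing the unit $\beta\leq P_{\pr_A}(\alpha)$ along $\langle\id_A,h\rangle$ yields $P_{\langle\id_A,h\rangle}(\beta)\leq \alpha$, since $\pr_A\circ\langle\id_A,h\rangle=\id_A$. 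The main, and rather mild, obstacle is simply the careful juggling of product projections and pairings to fit the equation $\alpha=\exists_\rho(\beta')$ into the exact shape required by Definition~\ref{definition pure ex splitting rel to P}; nothing genuinely hard occurs, as the whole argument rests on composition of left adjoints and unit inequalities.
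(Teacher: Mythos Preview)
Your proof is correct and follows essentially the same strategy as the paper's: cover $\beta$ by some $\beta'\in P'$, compose left adjoints to get $\alpha=\exists_{\rho}(\beta')$, invoke the relative splitting hypothesis to obtain $k=\langle h,l\rangle$, and then use the unit $\beta'\leq P_q(\beta)$ to descend to $\alpha\leq P_{\langle\id_A,h\rangle}(\beta)$. The only cosmetic difference is that the paper appeals to the $\leq$-characterisation of pure existential splittings (Proposition~\ref{proposition:existential free elements}), so it works with inequalities throughout and needs only one direction at the end, whereas you work straight from the equality in Definition~\ref{definition pure ex splitting} and therefore also check the converse inequality via the unit $\beta\leq P_{\pr_A}(\alpha)$; neither route involves anything the other does not already implicitly use.
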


\begin{proof}
  Let $\alpha$ be an element of $P'(A)$, and let us suppose that it is a pure existential splitting of $P$ relative to $P'$. Now suppose that $\alpha \leq \exists_{{\pr_A}}(\beta)$, with $\beta$ element of $P(A\times B)$. By assumption, $\beta$ can be written as $\beta=\exists_{\pr_{A\times B}}(\gamma)$ with $\gamma $ element of $P'(A\times B\times C)$ and $\freccia{A\times B\times C}{\pr_{A\times B}}{A\times B}$. Hence,  since left adjoints compose, we have that
   \[\alpha\leq  \exists_{{\pr_A'}} (\  \exists_{\pr_{A\times B}}(\gamma)  \ ) = \exists_{{\pr_A}}(\gamma)\]
with $\freccia{A\times B}{\pr_A'}{A}$ and $\freccia{A\times B\times C}{\pr_A}{A}$.
  Since $\alpha$ is a pure existential splitting of $P$ relative to $P'$, then there exists an arrow $\freccia{A}{\angbr{f}{g}}{B\times C}$ such that 
  \[\alpha\leq P_{\angbr{\id_A}{f,g}}(\gamma).\]
  Then, since $\beta=\exists_{{\pr_{A\times B}}}(\gamma)$, and hence, $\gamma\leq P_{{\pr_{A\times B}}}(\beta)$, we deduce that
  \[\alpha \leq  P_{\angbr{\id_A}{f,g}}(\ P_{{\pr_{A\times B}}}(\beta) \ )  =P_{\angbr{\id_A}{f}}(\beta).\]
  Therefore, by Proposition~\ref{proposition:existential free elements}, we can conclude that $\alpha$ is a pure existential splitting. 
\end{proof}

Combining this lemma with the definition of pure existential cover, we obtain the following corollary:
\begin{corollary}\label{cor_rel_split_implies_cover}
  $P'$ is  a pure existential  cover of $P$ if and only if $P'$ is a pure existential relative cover of $P$.
\end{corollary}

\begin{proof}
  If $P'$ is a pure existential  cover of $P$, then it is in particular a pure existential relative cover of $P$ since  every pure existential splitting element of $P$ is obviously a pure existential splitting element of $P$ relative to $P'$.
  The converse follows  from Lemma~\ref{lemma_relative_splitting_and_cover_implies_splitting}  since every element of $P'$ which is a pure existential free element of $P$ relative to $P'$ is  also a pure existential splitting element for $P$.
\end{proof}

Now we are ready to recall the main result from~\cite{maietti_trotta}. Notice that, with respect to the original result, here we present an extra equivalent condition, based on Corollary~\ref{cor_rel_split_implies_cover}:

\begin{theorem}\label{theorem caract. gen. pure existential completion}
  The following are equivalent:  
  \begin{enumerate}  
    \item $P$ is isomorphic to the pure existential completion  $\purecompex{(P')}$ of a primary doctrine $P'$;
    \item $P$ satisfies the following points:
      \begin{enumerate}
        \item $P$ satisfies  the rule of choice RC;
        \item for every pure existential free element $\alpha$ and $\beta$ of $P(A)$, then $\alpha\wedge \beta$ is a pure existential free.
        \item $P$ has enough pure existential free elements;
       
      \end{enumerate}
    \item $P$ has a (unique) pure existential cover;
    \item $P$ has a (unique) pure existential relative cover.
  \end{enumerate}
\end{theorem}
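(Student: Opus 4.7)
The plan is to split the four-way equivalence into two parts: the three-way equivalence $(1)\Leftrightarrow(2)\Leftrightarrow(3)$, which is the content of the original Theorem 4.16 of \cite{maietti_trotta}, and the new equivalence $(3)\Leftrightarrow(4)$, which is obtained by invoking Corollary~\ref{cor_rel_split_implies_cover}.

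First I would recall the three-way equivalence $(1)\Leftrightarrow(2)\Leftrightarrow(3)$, whose structure is as follows. The implication $(1)\Rightarrow(3)$ is immediate from the first bullet of Lemma~\ref{lemma_cover_P_and_ex_excomp}, which says that whenever $P\cong \purecompex{(P')}$ the generating doctrine $P'$ is a pure existential cover of $P$; the uniqueness of that cover follows from the second bullet of the same lemma, since the elements of any pure existential cover coincide with the pure existential free elements of $P$. For $(3)\Rightarrow(2)$, condition (c) is precisely the covering clause in Definition~\ref{def:lambda existential cover}; condition (a) (Rule of Choice) follows by covering the top element $\top_A$ by some pure existential free element and applying Proposition~\ref{proposition:existential free elements}; condition (b) uses that existential free elements coincide with $P'$, which is a primary subdoctrine and thus closed under binary meets in each fibre. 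For the converse $(2)\Rightarrow(1)$, the full subdoctrine of pure existential free elements is primary by (b), and using (a) and (c) one verifies that the canonical comparison functor from $\purecompex{(P')}$ to $P$ is an equivalence.

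For the new equivalence $(3)\Leftrightarrow(4)$, the direction $(3)\Rightarrow(4)$ is immediate: every pure existential splitting element of $P$ is \emph{a fortiori} a pure existential splitting element of $P$ relative to any fixed full subdoctrine $P'$, since the quantification in Definition~\ref{definition pure ex splitting rel to P} ranges over a smaller class than in Definition~\ref{definition pure ex splitting}. The converse $(4)\Rightarrow(3)$ is the content of Corollary~\ref{cor_rel_split_implies_cover}, which relies on Lemma~\ref{lemma_relative_splitting_and_cover_implies_splitting}: once one knows that every element of $P$ is covered by an element of $P'$, any relatively-splitting element of $P'$ is promoted to a genuine pure existential splitting element of $P$. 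The uniqueness clauses of $(3)$ and $(4)$ match up via the second bullet of Lemma~\ref{lemma_cover_P_and_ex_excomp}.

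The main obstacle is not conceptual, since all the heavy lifting has already been carried out in the cited Theorem 4.16 of \cite{maietti_trotta} and in Lemma~\ref{lemma_relative_splitting_and_cover_implies_splitting}. The only point requiring care is the bookkeeping of the uniqueness statements and ensuring that the newly introduced notion of \emph{pure existential relative cover} really does generalize the absolute one in such a way that the two coincide exactly when the covering hypothesis holds; this is packaged in Corollary~\ref{cor_rel_split_implies_cover} and completes the argument.
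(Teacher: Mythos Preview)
Your proposal is correct and follows exactly the paper's approach: the equivalence $(1)\Leftrightarrow(2)\Leftrightarrow(3)$ is imported from \cite[Thm.~4.16]{maietti_trotta}, and the new clause $(4)$ is linked to $(3)$ via Corollary~\ref{cor_rel_split_implies_cover}. One minor remark: your sketch of $(3)\Rightarrow(2)(a)$ is slightly roundabout---the direct argument is that $\top_A\in P'(A)$ (since $P'$ is a primary subdoctrine), hence $\top_A$ is pure existential splitting, hence RC holds by Remark~\ref{rem_RC_and top_ex_splitting}---but this is immaterial since that part is cited rather than reproved.
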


A relevant application of the previous characterization is given in the context of doctrines with  \emph{Hilbert's}  $\epsilon$-\emph{operators}. We recall from  \cite{TECH}  the following definitions:
\begin{definition}
  Let $\doctrine{\mC}{P}$ be an elementary pure existential doctrine. An object $B$ of $\mC$ is equipped with Hilbert's  $\epsilon$-\textbf{operator} if, for any object $A$ in $\mC$ and any $\alpha$ in $P(A\times B)$ there exists an arrow $\freccia{A}{\epsilon_{\alpha}}{B}$ such that
  $\exists_{\pr_A}(\alpha)=P_{\angbr{\id_A}{\epsilon_{\alpha}}}(\alpha)$
  holds in $P(A)$.
\end{definition}

\begin{definition}\label{def_Hilbert_epsilon_doctrine}
  We say that an elementary pure existential doctrine $\doctrine{\mC}{P}$ is \textbf{equipped with}  Hilbert's $\epsilon$-\textbf{operators} if every object in $\mC$ is equipped with an  $\epsilon$-operator.
\end{definition}

Doctrines equipped with  Hilbert's $\epsilon$-operators have been characterized in  \cite{maietti_trotta} in terms of pure existential completions as follows:
\begin{theorem}\label{ex_doctrines_with_Hilber_are_ex_comp}
    Every elementary pure existential doctrine  is equipped with Hilbert's $\epsilon$-operators if and only if it is (equivalent to) the pure existential completion of itself. 
\end{theorem}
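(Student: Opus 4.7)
My plan is to invoke the characterization in Theorem~\ref{theorem caract. gen. pure existential completion} and to reduce the biconditional to a direct comparison between Hilbert's $\epsilon$-condition and the definition of pure existential splitting elements. The key observation I will use is that $P \cong \purecompex{P}$ is equivalent, via clauses $(1)$ and $(3)$ of that theorem, to saying that $P$ itself serves as a pure existential cover of $P$. Once this is set up, the coverage clause of Definition~\ref{def:lambda existential cover} becomes vacuous when the cover subdoctrine is $P$ itself, since any $\alpha \in P(A)$ covers itself along the projection $\pr\colon A\times 1 \to A$ (using Frobenius and $\exists_{\pr}(\top_{A\times 1}) = \top_A$). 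Thus the entire statement collapses to proving the equivalence: every element of every fibre $P(A)$ is pure existential free if and only if $P$ is equipped with Hilbert's $\epsilon$-operators. The main potential obstacle is keeping track of this reduction, since the remaining implications turn out to be near-tautological unpackings of the relevant definitions.

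For the forward direction, assuming $P$ is equipped with Hilbert's $\epsilon$-operators, I will show that every $\alpha \in P(A)$ is pure existential splitting, and thus pure existential free, since the splitting property is manifestly preserved by reindexing when it holds for \emph{every} fibre element. Concretely, given a projection $\freccia{A\times B}{\pr_A}{A}$ and $\beta \in P(A\times B)$ with $\alpha = \exists_{\pr_A}(\beta)$, Definition~\ref{def_Hilbert_epsilon_doctrine} supplies an arrow $\freccia{A}{\epsilon_\beta}{B}$ with $\exists_{\pr_A}(\beta) = P_{\angbr{\id_A}{\epsilon_\beta}}(\beta)$, so setting $h := \epsilon_\beta$ yields the witness required by Definition~\ref{definition pure ex splitting}. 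Theorem~\ref{theorem caract. gen. pure existential completion} then delivers $P \cong \purecompex{P}$.

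For the converse direction, assume $P \cong \purecompex{P}$. By the first bullet of Lemma~\ref{lemma_cover_P_and_ex_excomp} (applied to $P'=P$), $P$ is a pure existential cover of itself, whence by Definition~\ref{def:lambda existential cover} every element of every $P(A)$ is pure existential splitting of $P$ relative to $P$; by Remark~\ref{enough_PEF_implies_every_PES_is_PEF} this upgrades to pure existential free. Now for arbitrary $\alpha \in P(A\times B)$, set $\gamma := \exists_{\pr_A}(\alpha)$ for the projection $\freccia{A\times B}{\pr_A}{A}$. Since $\gamma$ is pure existential splitting and $\gamma = \exists_{\pr_A}(\alpha)$, there exists an arrow $\freccia{A}{h}{B}$ with $\gamma = P_{\angbr{\id_A}{h}}(\alpha)$, and taking $\epsilon_\alpha := h$ produces the desired $\epsilon$-operator. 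As $A$, $B$, and $\alpha$ were arbitrary, every object of the base carries such an operator, matching Definition~\ref{def_Hilbert_epsilon_doctrine} and completing the proof.
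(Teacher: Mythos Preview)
Your proof is correct. The paper itself does not supply a proof of this theorem; it simply records the result as ``originally presented in \cite{maietti_trotta}'' and moves on, so there is no in-paper argument to compare against. Your route---reducing via clauses (1) and (3) of Theorem~\ref{theorem caract. gen. pure existential completion} to the statement that $P$ is its own pure existential cover, then observing that the coverage clause is automatic (via the terminal object and Frobenius) while the splitting clause is literally the $\epsilon$-operator condition---is exactly the natural argument one would extract from the characterization machinery the paper does develop, and all the steps check out. One cosmetic remark: in the converse direction you write ``pure existential splitting of $P$ relative to $P$'', but Definition~\ref{def:lambda existential cover} already gives you ordinary pure existential splitting (and indeed pure existential free) directly, so the detour through the relative notion and Remark~\ref{enough_PEF_implies_every_PES_is_PEF} is unnecessary, though harmless.
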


We conclude this section by recalling the following example from \cite{maietti_trotta}:

\begin{example}\label{example pure ex completion synt doct}
  Let $\theory_0$ be the  fragment $\lang_{=,\exists}$  of first-order Intuitionistic Logic, defined in Example~\ref{example regular fragment}, with no extra-logical axioms on an arbitrary signature  and let $\mathsf{H}_0$ be the Horn theory given the Horn fragment $\lang_=$ of first-order Intuitionistic Logic, defined in Example~\ref{example horn fragment}, with no extra-logical axioms on the same signature. Then, the elementary pure existential doctrine  $\doctrine{\cont}{\syntdoc_{=, \exists}^{\theory_0}} $   is the pure existential completion of the syntactic elementary doctrine
  $\doctrine{\cont}{\syntdoc_{=}^{\mathsf{H}_0}}$,
  namely $\purecompex{({\syntdoc_{=}^{\mathsf{H}_0}})}\equiv \syntdoc_{=,\exists}^{\theory_0}$.
\end{example}

\section{Regular and Exact completions of  elementary pure existential doctrines}

In this section we first recall well-known characterizations of the $\reg/\lex$ and $\ex/\lex$-completions in  \cite{SFEC,FECLEO,REC} and then we pass to remind notions and results related to  the regular and exact completions 
of elementary, pure existential doctrines in \cite{QCFF,EQC,UEC,TECH}.

Remember from \cite[Lem. 5.1]{SFEC} the following characterization of the  $\reg/\lex$ completion (we refer also \cite[Sec. A.1.3]{SAE_vol_1} for a detailed analysis of this completion):
\begin{theorem}[\cite{SFEC}]\label{carbonireg}
Any  regular category $\mathcal{A}$  is  the regular completion  of the full
subcategory $\mathcal{P}_{\mA}$ of  its regular projectives if and only if $\mathcal{P}_{\mA}$ is closed under finite limits in $\mathcal{A}$ and $\mathcal{A}$ has enough regular projectives, and in addition every object of $\mathcal{A}$ can be embedded in a regular projective.
\end{theorem}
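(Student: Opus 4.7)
The plan is to prove both directions using the explicit pseudo-equivalence presentation of the $\reg/\lex$ completion: for a lex category $\mC$, the category $\mC_{\reg/\lex}$ has as objects pseudo-equivalence relations $(X,R)$ in $\mC$ and as morphisms equivalence classes of functional relations, with a canonical lex embedding $\iota\colon \mC\hookrightarrow \mC_{\reg/\lex}$ sending $X$ to $(X,\Delta_X)$, whose essential image coincides with the regular projectives of $\mC_{\reg/\lex}$.

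For the forward direction, assume $\mA\simeq (\mathcal{P}_\mA)_{\reg/\lex}$. Closure of $\mathcal{P}_\mA$ under finite limits follows because $\iota$ is lex and its image is precisely the projectives. Enough projectives comes from the canonical regular epi $\iota(X)\twoheadrightarrow (X,R)$. Finally, every object $(X,R)$ embeds as a subobject of the projective $\iota(X\times X)$ via the monomorphism underlying $R\hookrightarrow X\times X$, which gives the third condition.

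For the converse, given a regular $\mA$ satisfying the three conditions, the lex inclusion $\mathcal{P}_\mA\hookrightarrow \mA$ extends uniquely, by the universal property of the regular completion, to a regular functor $\bar F\colon (\mathcal{P}_\mA)_{\reg/\lex}\to \mA$; I would show $\bar F$ is an equivalence. Essential surjectivity uses ``enough projectives'' to exhibit any $A\in\mA$ as the coequalizer of the kernel pair of a cover $P\twoheadrightarrow A$ with $P\in\mathcal{P}_\mA$; here the embedding hypothesis lets one replace this kernel pair, which is a subobject of $P\times P\in\mathcal{P}_\mA$, by a pseudo-equivalence relation wholly in $\mathcal{P}_\mA$ that still presents $A$. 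Faithfulness is inherited from the inclusion on the projectives.

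The main obstacle is fullness: a morphism $f\colon A\to B$ with presentations $A=P/R$ and $B=Q/S$ must be realized by a functional relation between $(P,R)$ and $(Q,S)$ in $\mathcal{P}_\mA$. Since $P$ is projective, the composite $P\twoheadrightarrow A\xrightarrow{f}B$ lifts along $Q\twoheadrightarrow B$ to some $P\to Q$, but the delicate part is to use the third condition (embeddability into a projective) to control both the image of this lift and its independence modulo $S$, producing a well-defined morphism in the regular completion; verifying that $\bar F$ sends this morphism back to $f$ is then a diagram chase.
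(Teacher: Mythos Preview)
The paper does not prove this theorem at all: it is simply recalled from the literature, introduced by ``Remember from \cite[Lem.~5.1]{SFEC} the following characterization\ldots'' and stated with the citation \cite{SFEC} but no argument. There is therefore nothing in the paper to compare your proposal against; the authors use Theorem~\ref{carbonireg} as a black box in the proof of Theorem~\ref{thm_main_1}.

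That said, a brief comment on your sketch. Your description of $\mC_{\reg/\lex}$ in terms of pseudo-equivalence relations is really the presentation of the $\ex/\lex$-completion; the standard presentation of the $\reg/\lex$-completion (as in Carboni and Carboni--Vitale) takes as objects \emph{arrows} $f\colon X\to Y$ of $\mC$, thought of as formal images, with $\iota(Y)=\id_Y$. In that presentation the three conditions in the forward direction are immediate: the object $f\colon X\to Y$ is by construction a subobject of $\iota(Y)$ and admits a regular epi from $\iota(X)$. Your claimed embedding of $(X,R)$ into $\iota(X\times X)$ ``via the monomorphism underlying $R\hookrightarrow X\times X$'' does not obviously yield a monomorphism with domain $(X,R)$ in the quotient category, so that step would need reworking if you insist on the equivalence-relation presentation. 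For the converse, your outline is in the right spirit, and the ``embedding into a projective'' hypothesis is indeed exactly what is needed to keep the kernel-pair data inside $\mathcal{P}_\mA$; but since the paper defers entirely to \cite{SFEC}, you should consult that reference directly for the details you flag as delicate.
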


Then, recall the following decomposition of $\ex/\lex$-completion shown by A.~Carboni and E.~Vitale~\cite{REC}:
\begin{theorem}[\cite{REC} ]\label{carbonidec}
For any category $\mathcal{C}$ with finite limits,
the $\ex/\lex$-completion of $\mathcal{C}$ is equivalent to the $\ex/\reg$-completion of the $\reg/\lex$-completion of $\mathcal C$, namely
$$\exlex{\mathcal{C}}\equiv \exreg{ \reglex{\mathcal{C}}}$$
\end{theorem}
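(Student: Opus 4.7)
The plan is to deduce the decomposition from the universal properties of the three completions, viewed as left biadjoints to appropriate forgetful 2-functors between the 2-categories $\Excat$ of exact categories, $\Reg$ of regular categories, and $\mathsf{LexCat}$ of finitely complete categories (equipped with the corresponding structure-preserving functors as 1-cells). There are evident forgetful 2-functors $U_2 \colon \Excat \to \Reg$ and $U_1 \colon \Reg \to \mathsf{LexCat}$; it is standard (see \cite{SFEC,REC}) that $\reglex{-}$ is left biadjoint to $U_1$, that $\exreg{-}$ is left biadjoint to $U_2$, and that $\exlex{-}$ is left biadjoint to the composite $U_1 \circ U_2$.

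Granted these biadjunctions, the proof becomes essentially formal: the composition of two left biadjoints is a left biadjoint to the composition of the corresponding right biadjoints. Hence $\exreg{-} \circ \reglex{-}$ is left biadjoint to $U_1 \circ U_2$, and by uniqueness of biadjoints up to equivalence we obtain $\exlex{-} \equiv \exreg{-} \circ \reglex{-}$; specializing at a given lex category $\mathcal{C}$ yields the desired equivalence $\exlex{\mathcal{C}} \equiv \exreg{\reglex{\mathcal{C}}}$. More concretely, given a lex functor $F \colon \mathcal{C} \to \mE$ into an exact category, one first extends $F$ to a regular functor $\tilde F \colon \reglex{\mathcal{C}} \to \mE$ using the universal property of $\reglex{-}$ (viewing $\mE$ as a regular category via $U_2$), and then extends $\tilde F$ to an exact functor $\hat F \colon \exreg{\reglex{\mathcal{C}}} \to \mE$ using the universal property of $\exreg{-}$. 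The composite extension is essentially unique, and therefore witnesses the universal property of $\exlex{\mathcal{C}}$.

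The main technical obstacle is the 2-categorical coherence: strictly speaking all three universal properties are pseudo-adjunctions, so the extensions and their uniqueness must be tracked up to natural isomorphism rather than on the nose. This is handled by working throughout with biadjoints. An alternative, more concrete route would be to invoke Theorem~\ref{carbonireg} to view $\reglex{\mathcal{C}}$ as the regular category whose regular projectives are (equivalent to) $\mathcal{C}$, and then exhibit a direct equivalence between $\exlex{\mathcal{C}}$---described in \cite{REC} via pseudo-equivalence relations in $\mathcal{C}$---and $\exreg{\reglex{\mathcal{C}}}$---described via equivalence relations in $\reglex{\mathcal{C}}$. This concrete route requires showing that every equivalence relation in $\reglex{\mathcal{C}}$ arises, up to isomorphism, from a pseudo-equivalence relation in $\mathcal{C}$ via image-factorization, which is laborious; consequently, the universal-property argument is preferable.
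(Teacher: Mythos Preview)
The paper does not prove this statement: Theorem~\ref{carbonidec} is simply recalled from \cite{REC} with no argument given, so there is no ``paper's own proof'' to compare against. Your proposal via composition of left biadjoints is correct and is in fact the standard argument (and the one underlying \cite{REC}): once $\reglex{-}\dashv U_1$, $\exreg{-}\dashv U_2$, and $\exlex{-}\dashv U_1U_2$ are granted, the decomposition is immediate from uniqueness of biadjoints.
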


\subsection{Regular and exact completions of doctrines}
We recall from \cite{TECH} the \emph{regular completion} of an elementary and pure existential doctrine. We provide a direct explicit description of this construction, while we refer to \cite{TECH} for its equivalent presentation  in terms of  the category of \emph{entire and functional relations} of  the completion  with comprehensions and comprehensive diagonals of an elementary existential doctrine.

\begin{definition}\label{remark_arrows_reg_espliciti}
Let $P$ be an elementary pure existential doctrine. 
The \textbf{ regular completion} $\regdoctrine{P}$  of $P$ is the category defined as follows:
\begin{itemize}
  \item an object is a pair $(A,\alpha)$ where $A$ is an object of $\mC$ and  $\alpha \in P(A)$;
\item an arrow from $(A,\alpha)$ to $(B,\beta)$ is given by an element $\phi$ of $P(A\times B)$ such that:
	\begin{enumerate}
		\item  $\phi \leq P_{\pr_1}(\alpha)\wedge P_{\pr_2}(\beta)$;\hfill \textbf{(well-defined)} 
		\item $\alpha\leq \exists_{\pr_1}(\phi)$; \hfill\textbf{(entire)} 
		\item  $P_{\angbr{\pr_1}{\pr_2}}(\phi)\wedge P_{\angbr{\pr_1}{\pr_3}}(\phi)\leq P_{\angbr{\pr_2}{\pr_3}}(\delta_B)$. \hfill \textbf{(functional)}
	\end{enumerate}
\end{itemize}
\end{definition}

\begin{remark}\label{remark_phi_is_exists_id_f_a}
  Observe that,  if $\psi$ and $\varphi$ are morphisms of $\regdoctrine{P}$ from $(A,\alpha)$ to $(B,\beta)$ and $\psi\leq \varphi$ then $\psi=\varphi$. We refer to \cite{pitts1981theory} for this remark.
\end{remark}
     
The universal properties of the regular completion of a doctrine are studied in~\cite[Thm. 3.3]{TECH}. We recall here the main result:
\begin{theorem}\label{theorem maietti rosolini pasquali regular comp}\label{mainreg}
  Let $\doctrine{\mC}{P}$ be an elementary and pure existential doctrine. Then the category $\regdoctrine{P}$ is regular, and the assignment $P\mapsto \regdoctrine{P}$ extends to a 2-functor
  \[\freccia{\EED}{\regdoctrine{-}}{\Reg}\]
  which is  a left biadjoint to the inclusion of the 2-category $\Reg$ of regular categories in the 2-category $\EED$ of elementary and pure existential doctrines given by the assignment $\mC\mapsto \Sub_{\mC}$.
\end{theorem}

\begin{remark}\label{rem_graph_functor}
  Recall from \cite{TECH} that given an elementary and pure existential doctrine $\doctrine{\mC}{P}$ and an arrow $\freccia{A}{f}{B}$ of $\mC$, we have that the \emph{graph} $P_{f \times \id_B}(\delta_B)$ of $f$ is an entire and functional relation from  $A$ to $B$ and this defines the \emph{graph functor} $\freccia{\mC}{G}{\regdoctrine{P}}$ which preserves finite products.
\end{remark}

\begin{remark}\label{remark_embedding_Gr_in_reg}
  Let $\doctrine{\mC}{P}$ be an elementary and pure existential doctrine. Notice that the categories $\Pred{P}$, $\mG_P$ and $\regdoctrine{P}$ have the same objects, but increasingly general morphisms.
  \end{remark}
Observe that the graph functor $\freccia{\mC}{G}{\regdoctrine{P}}$ defined in Remark~\ref{rem_graph_functor} extends to a functor from $\Pred{P}$ (and also
from $\mG_P$) to $\regdoctrine{P}$ and, more generally, to a functor from $\Pred{P'}$ (and also from $\mG_{P'}$) to $\regdoctrine{P}$ for a given subdoctrine $P'$.


\begin{definition}\label{def_graph_functor}
  Given an elementary pure existential doctrine $P$ and an elementary subdoctrine $P'$ we can define an embedding, called \textbf{graph functor}
  $$\freccia{\Pred{P'}}{G_{|_{P'}}}{\regdoctrine{P}}$$ by  mapping $(A,\alpha)$  of $ \Pred{P'}$ into  $(A,\alpha)$  of $\regdoctrine{P}$ and an arrow $\freccia{(A,\alpha)}{[f]}{(B,\beta)}$ of $\Pred{P'}$ into the arrow $ G_{|_{P'}}([f]) =P_{f \times \id_B}(\delta_B) \wedge\ (P_{\pi_1}(\alpha)\ \wedge\ P_{\pi_2 }(\beta)\ )$ from $(A,\alpha)$ to $(B, \beta)$ of $\regdoctrine{P}$.
\end{definition}

\begin{remark}\label{graph}
  The graph functor of an elementary pure existential doctrine $P$ with an elementary subdoctrine $P'$  can be also defined as  $G_{|_{P'}}([f])=\exists_{\angbr{\id_A}{f}}(\alpha)$ because $P$ has left adjoints along arbitrary arrows, see Remark~\ref{rem left adjoint elementary pure existential doc}.
\end{remark}

We refer to \cite{TECH} for the following result:
\begin{proposition}\label{prop_can_embedding}
  The previous assignments provide a well-defined functor $\freccia{\Pred{P'}}{G_{|_{P'}}}{\regdoctrine{P}}$, and it preserves finite limits. Moreover, it is faithful, and it induces a regular functor ${G_{|_{P'}}^{\reg}}$:
  \[\begin{tikzcd}
	  & \reglex{\Pred{P'}} \\
	  \Pred{P'} & \regdoctrine{P}
	  \arrow["{G_{|_{P'}}}"',from=2-1, to=2-2]
	  \arrow[hook,from=2-1, to=1-2]
	  \arrow["{G_{|_{P'}}^{\reg}}",from=1-2, to=2-2]
  \end{tikzcd}\]
\end{proposition}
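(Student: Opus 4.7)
The plan is to verify each claim of the proposition in turn: well-definedness and functoriality of $G_{|_{P'}}$, preservation of finite limits, faithfulness, and finally the existence of the regular extension via the universal property of the $\reg/\lex$-completion.

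First, I would check that $G_{|_{P'}}$ is a well-defined functor. Given $(A,\alpha) \in \Pred{P'}$, this is also an object of $\regdoctrine{P}$ since $P'$ is a full subdoctrine of $P$. For an arrow $[f]\colon (A,\alpha) \to (B,\beta)$, I would verify that $\phi := P_{f\times \id_B}(\delta_B) \wedge P_{\pr_1}(\alpha) \wedge P_{\pr_2}(\beta)$ satisfies the three conditions of Remark~\ref{remark_arrows_reg_espliciti}: conditions (1) and (3) are immediate from the properties of $\delta_B$ together with elementarity, while entirety (condition (2)) follows from the hypothesis $\alpha \leq P_f(\beta)$ and a Frobenius-style computation using $\exists_{\pr_1}$ applied to the graph of $f$. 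Independence from the chosen representative (i.e.\ $f\sim g$ implies $G_{|_{P'}}([f])=G_{|_{P'}}([g])$) follows since $\top_A \leq P_{\angbr{f}{g}}(\delta_B)$ forces $P_{f\times \id_B}(\delta_B)=P_{g\times \id_B}(\delta_B)$. Preservation of identities reduces to the property $P_{\angbr{\id_A}{\id_A}}(\delta_A)=\top_A$ combined with the definition of $\delta$, and preservation of composition is a direct application of the relational-composition formula and the functoriality of reindexing applied to graphs.

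Second, for preservation of finite limits, I would argue that the terminal $(1,\top_1)$ of $\Pred{P'}$ is sent to the terminal of $\regdoctrine{P}$ (both being the same underlying pair with the unique entire functional relation being $\top$). For pullbacks, I would use the explicit description of pullbacks in $\Pred{P'}$ recalled in Remark~\ref{rem_finite_limit_pred} and match it with the corresponding pullback in $\regdoctrine{P}$, checking that the graph of the pullback projection coincides, as a functional entire relation, with the relation obtained by composing the graphs in $\regdoctrine{P}$. For faithfulness, suppose $G_{|_{P'}}([f])=G_{|_{P'}}([g])$ as arrows $(A,\alpha)\to(B,\beta)$ in $\regdoctrine{P}$. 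Reindexing the equality along $\angbr{\id_A}{g}$ and using $\alpha \leq P_g(\beta)$ together with the reflexivity of $\delta_B$ yields $\alpha \leq P_{\angbr{f}{g}}(\delta_B)$, which by comprehensive diagonals of the extensional reflection forces $[f]=[g]$ in $\Pred{P'}$.

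Finally, for the induced regular functor, I would appeal to the universal property of $(-)_{\reg/\lex}$: the category $\regdoctrine{P}$ is regular by the general results of~\cite{TECH}, and any finite-limit-preserving functor from a lex category into a regular category extends, uniquely up to natural isomorphism, along the canonical embedding $\Pred{P'}\hookrightarrow \reglex{\Pred{P'}}$ to a regular functor. Since we have just shown that $G_{|_{P'}}$ is lex, this universal property produces the required regular functor $G_{|_{P'}}^{\reg}\colon\reglex{\Pred{P'}}\to\regdoctrine{P}$ making the triangle commute.

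The main obstacle is the careful verification of well-definedness and functoriality in the first paragraph, particularly checking that composition is preserved: this requires rewriting the relational composition of two graph relations by applying Beck-Chevalley and Frobenius (available since we are in $P_{cx}$, see Lemma~\ref{lemma rel funzionale iff pr rho mono}) and identifying the result with the graph of the composed morphism. All the other steps are either structural (faithfulness via comprehensive diagonals) or appeals to established universal properties.
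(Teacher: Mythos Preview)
Your proposal is correct and essentially follows the same logical structure as the paper's argument, but it is far more explicit. The paper's own proof is a two-line citation: it invokes \cite[Thm.~3.2]{TECH} (together with Remark~\ref{rem_finite_limit_pred}) for well-definedness, lex-preservation and faithfulness of $G_{|_{P'}}$, and then appeals to the universal property of the $\reg/\lex$-completion from \cite{SFEC} for the existence of $G_{|_{P'}}^{\reg}$. You instead unfold each verification by hand---checking the entire/functional conditions, independence from representatives, functoriality, the pullback comparison, and faithfulness via comprehensive diagonals---before making the same final appeal to the universal property. What you gain is self-containment (no black-box citation to \cite{TECH}); what the paper gains is brevity, since the cited result already packages exactly these verifications for the graph functor on the full $\Pred{P}$, and the restriction to the subcategory $\Pred{P'}$ is then immediate.
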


\begin{proof}
  The first part follows by  \cite[Thm. 3.2]{TECH} (and from the fact that $\Pred{P'}$ is lex, see Remark~\ref{rem_finite_limit_pred}), while
  the existence of the regular functor ${G_{|_{P'}}^{\reg}}$ follows by the universal property of the $\reg/\lex$-completion in \cite{SFEC}.
\end{proof}

\begin{example}\label{ex_regular_comp_sub}
  The regular completion $\regdoctrine{\Sub_{\mC}}$ of the doctrine $\doctrine{\mC}{\Sub_{\mC}}$ of subobjects of a regular category is equivalent to $\mC$~\cite[Cor. 5.4]{TECH}.
\end{example}

\begin{example}\label{ex_regular_comp_weak_sub}
  The regular completion $\regdoctrine{\Psi_{\mD}}$ of the doctrine $\doctrine{\mD}{\Psi_{\mD}}$ of weak subobjects  presented in Example~\ref{example weak subobjects doctrine} coincides with the regular completion $\reglex{\mD}$ of the lex category $\mD$, in the sense of~\cite{REC}. We refer to \cite{TECH} for more details.
\end{example}

\begin{example}\label{ex_regular_synt_cat}
  The regular completion $\regdoctrine{\syntdoc_{=,\exists}^{\theory}}$  performed on the syntactic doctrine   $\doctrine{\cont}{\syntdoc_{=,\exists}^{\theory}}$ defined in Example~\ref{example regular fragment} provides exactly the syntactic category denoted $\mathcal{C}^{\mathsf{reg}}_{\theory}$ associated with the theory  $\theory$ 
  in \cite[Sec. D1.4]{SAE}. 
\end{example}

Combining the regular completion of an elementary pure existential doctrine with the exact completion $\exreg{-}$ of a regular category \cite{SFEC,REC}, one can define the so-called \emph{exact completion of an elementary and pure existential doctrine} as  pointed out in \cite[Sec. 3]{UEC}:
\begin{definition}\label{def_exc_comp_doc}
  Let $P$ be an elementary pure existential doctrine. 
  We call   the category $\exdoctrine{P}:=\exreg{\regdoctrine{P}}$ \emph{the  \textbf{exact completion}  of $P$}.
\end{definition}

The universal properties of the exact completion of an elementary and pure existential doctrine can be deduced by combining the universal properties of the regular completion of a doctrine (see Theorem~\ref{theorem maietti rosolini pasquali regular comp}) with the universal properties of the $\ex/\reg$-completion, see \cite[Cor. 3.4]{UEC}. We recall here the main result:
    
\begin{theorem}\label{theorem maietti rosolini exact comp}\label{mainreg}
  Let $\doctrine{\mC}{P}$ be an elementary and pure existential doctrine. The category $\exdoctrine{P}$ is exact, and the assignment $P\mapsto \exdoctrine{P}$ extends to a 2-functor
  \[\freccia{\EED}{\exdoctrine{-}}{\Excat}\]
  which is a left biadjoint to the inclusion of the 2-category $\Excat$ of exact categories in the 2-category $\EED$ of elementary and pure existential doctrines given by the assignment $\mC\mapsto \Sub_{\mC}$.
\end{theorem}

\begin{example}\label{example_ex_comp_weak_sub}
  The exact completion $\exdoctrine{\Psi_{\mD}}$ of the weak subobjects doctrine $\doctrine{\mD}{\Psi_{\mD}}$ presented in Example~\ref{example weak subobjects doctrine} coincides with the exact completion $\exlex{\mD}$ of the lex category $\mD$, in the sense of~\cite{REC}. We refer to \cite[Ex. 4.4]{UEC} for more details.
\end{example}

\begin{example}\label{example_exact_comp_synt_doct}
  The exact completion $\exdoctrine{\syntdoc_{=,\exists}^{\theory}}=\exreg{\regdoctrine{\syntdoc_{=,\exists}^{\theory}}}$ of the syntactic doctrine $\doctrine{\cont}{\syntdoc_{=,\exists}^{\theory}}$ coincides with the \emph{effectivization} $\mathcal{E}_{\theory}:=\mathbf{Eff}(\mC^{\mathsf{reg}}_{\theory})$
   of  the syntactic category $\mC^{\mathsf{reg}}_{\theory}$ in \cite[pp. 849-850]{SAE}. 
\end{example}

\begin{example}\label{ex_exact_comp_sub}
  The exact completion of the subobject doctrine $\doctrine{\mC}{\Sub_{\mC}}$ of a regular category $\mC$ coincides with the well known construction of  the exact completion of a regular category $\exreg{\mC}$ as observed in \cite{UEC}. 
\end{example}

\begin{remark}\label{rem_exreg_coincides_exlex}
  When $\mC$  is a regular category whose regular epimorphisms split, we have that $\Sub_{\mC}$ happens to be equivalent to the weak-subobject doctrine  (this happens because the classes of regular epis and monos define a factorization system for $\mC$, see for example \cite[Thm 2.1.3]{HCA2}).
   By combining this fact with previous examples, we obtain an abstract proof  that  $\reglex{\mC}\equiv \mC$ (by Examples~\ref{ex_regular_comp_sub} and \ref{ex_regular_comp_weak_sub}) and $\exreg{\mC}\equiv \exlex{\mC}$ (by Examples~\ref{ex_exact_comp_sub} and \ref{example_ex_comp_weak_sub}).  In \cite[Rem. 1.3.10(c)]{SAE_vol_1},  when $\mC$  is a regular category whose regular epimorphisms split, there is a direct proof of the fact that  $\reglex{\mC}\equiv \mC$,   and from this we can alternatively deduce that  $\exreg{\mC}\equiv \exlex{\mC}$ directly by Theorem~\ref{carbonidec}.
\end{remark}

\section{Characterization of  regular and exact completions of pure existential completions}\label{sec_main}
In this section we present our main results characterizing the regular and exact completions of pure existential completions of elementary doctrines (Theorem~\ref{thm_main_1} and Corollary~\ref{cor_ex_comp_main_2}).

 In particular, we  first show that 
 an elementary and pure existential doctrine $P$ is the pure existential completion of an elementary doctrine $P'$ if and only if 
 the canonical embedding of $\Pred{P'}$ into  $\regdoctrine{P}$ gives rise to an equivalence $\regdoctrine{P}\equiv \reglex{\Pred{P'}}$.
To this aim, we show that,  inside the regular completion $\regdoctrine{P}$ of  the pure existential completion $P$ of an elementary subdoctrine $P'$,
 the comprehension $(A, \alpha)$ of  a pure existential free element $\alpha$ of $P(A)$
is a regular projective.
To this purpose, we recall a standard, but useful lemma holding in every regular category. We refer to \cite[Sec. 4.3]{van1995basic}.

\begin{lemma}
  In a regular category $\mC$, an arrow $\freccia{A}{f}{B}$ is a regular epi if and only if the subobject doctrine  $\doctrine{\mC}{\Sub_{\mC}}$ satisfies $\top_B\leq \exists_{\pr_B} \Sub_{f\times \id_B}(\delta_B)$.
\end{lemma}

This lemma, combined with the definition of arrows in $\regdoctrine{P}$, allows us to provide a simple description of the regular epimorphisms of the regular completion $\regdoctrine{P}$ of an elementary pure existential doctrine $\doctrine{\mC}{P}$. We refer to \cite[Sec. 2.5]{pitts1981theory} and \cite{van1995basic} for more details.

\begin{lemma}\label{lem_regular_epis_in_reg_P}
  A morphism $\freccia{(A,\alpha)}{\phi}{(B,\beta)}$  of $\regdoctrine{P}$ is a regular epimorphism if and only if $\beta=\exists_{\pr_B}(\phi)$ in $P(B)$.
\end{lemma}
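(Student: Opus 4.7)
The plan is to deduce the statement directly from the preceding lemma, which characterizes regular epimorphisms in any regular category via the internal language of its subobjects doctrine. So I apply that lemma to the regular category $\mC := \regdoctrine{P}$ and to the morphism $\phi \colon (A,\alpha) \to (B,\beta)$. This reduces the problem to computing two things inside $\Sub_{\regdoctrine{P}}$: the top element of $\Sub_{\regdoctrine{P}}((B,\beta))$, and the interpretation of the formula $\exists y\!:\!(A,\alpha).\,\phi(y,x)$.

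First, I identify the subobjects of $(B,\beta)$ in $\regdoctrine{P}$ with the elements $\beta' \in P(B)$ satisfying $\beta' \leq \beta$. This is essentially the description of subobjects in the tripos-to-topos style construction and can be extracted from Lemma~\ref{lemma rel funzionale iff pr rho mono}(5) applied to the doctrine $P_{cx}$, together with Remark~\ref{remark_phi_is_exists_id_f_a}. Under this identification the top subobject of $(B,\beta)$ is $\beta$ itself, and the subobject of $(A,\alpha) \times (B,\beta) = (A \times B,\, P_{\pr_1}(\alpha) \wedge P_{\pr_2}(\beta))$ named by $\phi$ is simply $\phi \in P(A \times B)$, which is $\leq P_{\pr_1}(\alpha) \wedge P_{\pr_2}(\beta)$ by clause~(1) of Remark~\ref{remark_arrows_reg_espliciti}.

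Second, I interpret the existential $\exists y\!:\!(A,\alpha).\,\phi(y,x)$: this is the direct image of the subobject $\phi$ along the projection $(A,\alpha)\times(B,\beta) \to (B,\beta)$ in $\regdoctrine{P}$, and under the identification above it coincides with the left adjoint $\exists_{\pr_B}(\phi) \in P(B)$, where $\freccia{A\times B}{\pr_B}{B}$ is the projection of the base category $\mC$. Plugging everything into the previous lemma, the sequent $\top \vdash_{\{x:(B,\beta)\}} \exists y\!:\!(A,\alpha).\,\phi(y,x)$ translates precisely to the inequality $\beta \leq \exists_{\pr_B}(\phi)$ in $P(B)$.

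Finally, the converse inequality $\exists_{\pr_B}(\phi) \leq \beta$ is automatic from $\phi \leq P_{\pr_2}(\beta)$ (clause~(1) of Remark~\ref{remark_arrows_reg_espliciti}) by adjunction, since $\exists_{\pr_B}(P_{\pr_2}(\beta)) \leq \beta$. Therefore $\beta \leq \exists_{\pr_B}(\phi)$ is equivalent to the equality $\beta = \exists_{\pr_B}(\phi)$, yielding the claim. The only delicate step is the first one, namely the careful bookkeeping to see that direct image along the regular projection of $\regdoctrine{P}$ does coincide with the fibrewise left adjoint $\exists_{\pr_B}$ of $P$; this is where one must use that $\regdoctrine{P} = \Ef{P_{cx}}$ and that all the relevant Beck--Chevalley conditions hold in $P_{cx}$ (Lemma~\ref{lemma rel funzionale iff pr rho mono}(2)).
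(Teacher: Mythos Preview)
Your proposal is correct and follows precisely the approach the paper indicates: the paper does not give a detailed proof of this lemma but states that it is obtained by combining the preceding lemma (the internal-language characterization of regular epis in a regular category) with the description of arrows in $\regdoctrine{P}$, referring to \cite{pitts1981theory} and \cite{van1995basic} for details. Your write-up is exactly this combination spelled out, including the identification $\Sub_{\regdoctrine{P}}((B,\beta))\cong P_{cx}(B,\beta)$ and the observation that the reverse inequality $\exists_{\pr_B}(\phi)\leq\beta$ is automatic from clause~(1) of Remark~\ref{remark_arrows_reg_espliciti}.
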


Now, we are going to show  that   pure existential splitting elements of a pure existential doctrine $P$  single out regular projective objects in the regular completion  $\regdoctrine{P}$.

To this purpose, we first prove the following useful lemma:
\begin{lemma}\label{lem_morph_from_ex_free_are_trakable} 
  If $\freccia{(A,\alpha)}{\phi}{(B,\beta)}$ is an arrow of $\regdoctrine{P}$ and $\alpha$ is a pure existential splitting element of $P$, then there exists an arrow $\freccia{A}{f}{B}$ such that $\alpha=P_{\angbr{\id_A}{f}}(\phi)$, with $\alpha\leq P_f(\beta)$. Moreover, for every arrow $\freccia{A}{g}{B}$ with such a property, we have that $\alpha \leq P_{\angbr{f}{g}}(\delta_B)$.
\end{lemma}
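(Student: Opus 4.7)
The plan is to unpack the definition of an arrow in $\regdoctrine{P}$, apply the characterization of existential splitting objects via a form of Existence Property (Proposition~\ref{proposition:existential free elements}), and then exploit functionality of $\phi$ to get the equality on $\delta_B$.

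First I would spell out what it means for $\phi$ to be an arrow from $(A,\alpha)$ to $(B,\beta)$: by Remark~\ref{remark_arrows_reg_espliciti}, $\phi \in P(A\times B)$ satisfies (i) $\phi \leq P_{\pr_1}(\alpha)\wedge P_{\pr_2}(\beta)$, (ii) the entirety condition $\alpha \leq \exists_{\pr_A}(\phi)$, and (iii) the functionality condition $P_{\angbr{\pr_1}{\pr_2}}(\phi)\wedge P_{\angbr{\pr_1}{\pr_3}}(\phi)\leq P_{\angbr{\pr_2}{\pr_3}}(\delta_B)$ in $P(A\times B\times B)$. Condition (ii) together with the hypothesis that $\alpha$ is a pure existential splitting element (and in the form stated in Proposition~\ref{proposition:existential free elements}) gives an arrow $\freccia{A}{f}{B}$ such that $\alpha \leq P_{\angbr{\id_A}{f}}(\phi)$.

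The first equality $\alpha = P_{\angbr{\id_A}{f}}(\phi)$ is then obtained by chasing the reindexing along $\angbr{\id_A}{f}$ in condition (i):
\[
P_{\angbr{\id_A}{f}}(\phi) \leq P_{\angbr{\id_A}{f}}\bigl(P_{\pr_1}(\alpha)\wedge P_{\pr_2}(\beta)\bigr) = \alpha \wedge P_f(\beta) \leq \alpha,
\]
so the inequality in the other direction follows from $\alpha \leq P_{\angbr{\id_A}{f}}(\phi)$, and the side statement $\alpha \leq P_f(\beta)$ follows at once from the middle term.

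For the uniqueness-like second part, given a second arrow $g$ with $\alpha = P_{\angbr{\id_A}{g}}(\phi)$, I would reindex the functionality condition (iii) along $\angbr{\id_A}{f,g}\colon A \to A\times B\times B$. The left-hand side becomes $P_{\angbr{\id_A}{f}}(\phi)\wedge P_{\angbr{\id_A}{g}}(\phi) = \alpha\wedge\alpha = \alpha$, while the right-hand side becomes $P_{\angbr{f}{g}}(\delta_B)$, yielding precisely $\alpha \leq P_{\angbr{f}{g}}(\delta_B)$. No real obstacle is expected: the only subtle point is making sure the existence-property formulation of splitting from Proposition~\ref{proposition:existential free elements} is invoked for the inequality $\alpha \leq \exists_{\pr_A}(\phi)$ (rather than equality), and being careful with the projections in the functionality condition.
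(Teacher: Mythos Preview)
Your proposal is correct and follows essentially the same route as the paper. The only cosmetic difference is that the paper first observes that entirety together with $\phi\leq P_{\pr_1}(\alpha)$ gives the \emph{equality} $\alpha=\exists_{\pr_1}(\phi)$ and then applies the splitting definition directly, whereas you invoke Proposition~\ref{proposition:existential free elements} for the inequality and recover the reverse inequality from condition (i); the functionality argument for the second part is identical.
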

    \begin{proof} Let $\freccia{(A,\alpha)}{\phi}{(B,\beta)}$ be an arrow of in $\regdoctrine{P}$.
        By definition of arrows in $\regdoctrine{P}$, we have that $\alpha=\exists_{\pr_1}(\phi)$, and then, by the universal property of pure existential splittings, we can conclude that there exists an arrow $\freccia{A}{f}{B}$ such that $\alpha=P_{\angbr{\id_A}{f}}(\phi)$. Moreover, since $\phi\leq P_{\pr_2}(\beta)$, we can conclude that $\alpha \leq P_f(\beta)$.

        Now  let us consider another arrow $\freccia{A}{g}{B}$ such that $\alpha=P_{\angbr{\id_A}{g}}(\phi)$. By definition, we have that $\phi$ is functional in $P$, namely
         $$P_{\angbr{\pr_1}{\pr_2}}(\phi)\wedge P_{\angbr{\pr_1}{\pr_3}}(\phi)\leq P_{\angbr{\pr_2}{\pr_3}}(\delta_B).$$
         Then we can apply $P_{\angbr{\id_A,f}{g}}$ to both sides of this inequality, obtaining
         \[P_{\angbr{\id_A}{f}}(\phi) \wedge P_{\angbr{\id_A}{g}}(\phi)\leq P_{\angbr{f}{g}}(\delta_B) \]
         that is
         \[\alpha \leq P_{\angbr{f}{g}}(\delta_B).\]

    \end{proof}
    \begin{remark}\label{rem_uniqueness_in_Pred}
    Notice that, by Lemma~\ref{lem_morph_from_ex_free_are_trakable}, we have that every arrow $\freccia{(A,\alpha)}{\phi}{(B,\beta)}$ of $\regdoctrine{P}$ with $\alpha$  pure existential splitting induces a unique arrow in $\Pred{P}$.
        
    \end{remark}
    
    Furthermore, pure existential splitting elements single out  regular projectives as follows:

\begin{proposition}\label{prop_ex_sp_implies_proj}
Let $\doctrine{\mC}{P}$ be an elementary and pure existential doctrine. Then every object $(A,\alpha)$ where $\alpha$ is pure existential splitting is  a regular projective in  $\regdoctrine{P}$.
\end{proposition}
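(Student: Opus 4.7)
To prove that $(A,\alpha)$ is regular projective when $\alpha$ is pure existential splitting, I plan to consider an arbitrary morphism $\phi\colon(A,\alpha)\to(B,\beta)$ and a regular epimorphism $\psi\colon(C,\gamma)\twoheadrightarrow(B,\beta)$ in $\regdoctrine{P}$ and to exhibit a lift, namely a morphism $\widetilde{\phi}\colon(A,\alpha)\to(C,\gamma)$ satisfying $\psi\circ \widetilde{\phi}=\phi$. The idea is to use the splitting property of $\alpha$ to extract a suitable arrow $h\colon A\to C$ of the base and to take $\widetilde{\phi}$ to be the image $G(h)$ of $h$ under the graph functor of Remark~\ref{remark_embedding_Gr_in_reg}.

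The construction of $h$ proceeds as follows. By Lemma~\ref{lem_morph_from_ex_free_are_trakable} applied to $\phi$, there exists $f\colon A\to B$ with $\alpha=P_{\angbr{\id_A}{f}}(\phi)$, and in particular $\alpha\leq P_f(\beta)$. Lemma~\ref{lem_regular_epis_in_reg_P} gives $\beta=\exists_{\pi_B}(\psi)$, so the Beck--Chevalley condition applied to the pullback of the projection $\pi_B\colon C\times B\to B$ along $f$ yields
\[P_f(\beta)=\exists_{\pi_A}(P_{\id_C\times f}(\psi))\]
with $\pi_A\colon C\times A\to A$. Thus $\alpha\leq \exists_{\pi_A}(P_{\id_C\times f}(\psi))$, and invoking the characterization of pure existential splittings in Proposition~\ref{proposition:existential free elements} (modulo the evident swap $A\times C\cong C\times A$) we obtain $h\colon A\to C$ with $\alpha\leq P_{\angbr{h}{f}}(\psi)$. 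Since $\psi\leq P_{\pi_C}(\gamma)$ as an arrow out of $(C,\gamma)$ in $\regdoctrine{P}$, this also forces $\alpha\leq P_h(\gamma)$, so $h$ indeed determines a morphism $G(h)\colon(A,\alpha)\to(C,\gamma)$.

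It remains to verify $\psi\circ G(h)=\phi$. Unwinding the formula for relational composition and simplifying via Frobenius reciprocity and the substitution rule of fibred equality, one obtains
\[\psi\circ G(h)=P_{h\times \id_B}(\psi)\wedge P_{\pi_A}(\alpha)\wedge P_{\pi_B}(\beta)\]
in $P(A\times B)$, whence $P_{\angbr{\id_A}{f}}(\psi\circ G(h))=\alpha\wedge P_h(\gamma)\wedge P_{\angbr{h}{f}}(\psi)=\alpha$ by the two inequalities derived above. Consequently the meet $\phi\wedge(\psi\circ G(h))$ is functional (as a meet of functional relations) and entire (since applying $P_{\angbr{\id_A}{f}}$ to it yields $\alpha$, which is bounded by $\exists_{\pi_A}$ of the meet via the unit of adjunction), so it is itself a morphism of $\regdoctrine{P}$ dominated by both $\phi$ and $\psi\circ G(h)$; Remark~\ref{remark_phi_is_exists_id_f_a} then forces the three to coincide. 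The main technical obstacle is the simplification of $\psi\circ G(h)$, which requires careful handling of Beck--Chevalley, Frobenius reciprocity and equality substitution in the three-fold product $A\times C\times B$; every other step is a direct application of the lemmas already collected in the paper.
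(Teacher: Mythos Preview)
Your proof is correct and follows essentially the same strategy as the paper: extract a base arrow into $C$ from the splitting property of $\alpha$, take its graph as the lift, and conclude via Remark~\ref{remark_phi_is_exists_id_f_a}. The only organizational differences are that you invoke the splitting property in two stages (first through Lemma~\ref{lem_morph_from_ex_free_are_trakable} to obtain $f$, then directly to obtain $h$) whereas the paper applies it once to the combined predicate $P_{\angbr{\pr_1}{\pr_3}}(\phi)\wedge P_{\angbr{\pr_2}{\pr_3}}(\psi)$ on $A\times C\times B$ to get both witnesses simultaneously, and that your final verification uses the meet $\phi\wedge(\psi\circ G(h))$ while the paper argues the inequality $\psi\circ\xi\leq\phi$ directly via functionality of $\psi$ and substitution along $\delta_B$.
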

\begin{proof}
 Let us consider the following diagram
    \begin{equation}
\begin{tikzcd}\label{diag_proeittivi}
	&& {(C,\gamma)} \\
	\\
	{(A,\alpha)} && {(B,\beta)}
	\arrow["{\phi}"', from=3-1, to=3-3]
	\arrow["{\psi}", from=1-3, to=3-3]
 	\arrow["{\xi}", dashed, from=3-1, to=1-3]
\end{tikzcd}
\end{equation}
with $\psi$ regular epi in  $\regdoctrine{P}$. By Lemma~\ref{lem_regular_epis_in_reg_P} we know that  $\beta= \exists_{\pr_B}(\psi)$, with $\freccia{C\times B}{\pr_2}{B}$.  Hence, we  are going to use the fact that $\alpha$ is pure existential splitting to show that  there exists a morphism  $\xi$  of  $\regdoctrine{P}$ such that the diagram~\eqref{diag_proeittivi} commutes. 

From $\beta= \exists_{\pr_2}(\psi)$  and $\alpha \leq \exists_{\pr_1} (\phi) \leq \exists_{\pr_1}(\phi  \wedge P_{\pr_2}(\beta) )$ (since $\phi$ is entire and well-defined)
 we can deduce (combining these with  BCC and FR) that 
 $$\alpha\leq \exists_{\pr_1}(  P_{\angbr{\pr_1}{\pr_3}} (\phi)  \wedge  P_{\angbr{\pr_2}{\pr_3}}(\psi) ).$$
 where here projections have domain $A\times C\times B$.

 Therefore, since $\alpha$ is existential splitting, there exists a morphism $\freccia{A}{\angbr{\id_A}{  h_1,h_2}}{A \times C\times B}$  such that
\begin{equation}\label{eq_alpha_phi_psi}
\alpha \leq P_{\angbr{\id_A}{ h_2}}(\phi) \wedge P_{\angbr{h_1}{h_2}} (\psi).
 \end{equation}
 Then,  we can define $$\xi :=  G([h_1])=P_{ h_1 \times \id_C}({ \delta_C}) \wedge ( P_{\pr_1} (\alpha)  \wedge  P_{\pr_2} (\gamma)) $$
 after noting that  $\freccia{(A,\alpha)}{[h_1]}{(C,\gamma)}$  is an arrow of $\Pred{P}$  since $\alpha \leq P_{\angbr{h_1}{h_2}} (\psi)\leq P_{h_1}(\gamma)$ being $\psi$ 
 an arrow of $\regdoctrine{P}$.
 
 Now we show that  $\psi \circ \xi=\phi$ (where $\psi \circ \xi$ denotes the composition of morphisms in $\regdoctrine{P}$).
The fact that predicates are descent objects for the equality allows us to deduce that
  $$\psi \circ \xi=\exists_{\angbr{\pr_1}{\pr_3}}(  P_{\angbr{\pr_1}{\pr_2}} (G([h_1]) ) \wedge  P_{\angbr{\pr_2}{\pr_3}}(\psi)  )\leq P_{\pr_1}(\alpha) \wedge P_{h_1\times \id_B}(\psi)$$
  and by functionality of $\psi$, together with $ \alpha \leq  P_{\angbr{h_1}{h_2}}(\psi)$ by \eqref{eq_alpha_phi_psi}, we can deduce
  $$P_{\pr_1}(\alpha) \wedge   P_{h_1\times \id_B}(\psi)  \leq P_{{h_2}\times  \id_B} (\delta_B).$$
 Therefore, since  $\alpha\leq P_{\angbr{\id_A}{h_2}}(\phi)  $ by \eqref{eq_alpha_phi_psi}, and hence  $P_{\pr_1}(\alpha)  \leq P_{ \angbr{\pi_1}{ h_2 \pi_1} }(\phi)  $,
  we get that
  $$ \psi \circ \xi \leq \phi.$$
 Hence,  by Remark~\ref{remark_phi_is_exists_id_f_a}, we can conclude that  $\phi=\psi\circ\xi$, i.e.  the diagram \eqref{diag_proeittivi} commutes in $\regdoctrine{P}$. This concludes the proof that $(A,\alpha)$ is a regular projective.

\end{proof}
Recall that in the context of regular categories, we say that an object $A$ is \emph{covered} by a regular projective $B$ if there exists a regular epi $\freccia{B}{e}{A}$.

\begin{lemma}\label{lem_projective_cover}
  Let $\doctrine{\mC}{P}$ be an elementary and pure existential doctrine with enough pure existential free elements. Then every object $(B,\beta)$ of $\regdoctrine{P}$ is covered by a regular projective object $(A\times B,\alpha)$, with $\alpha$ pure existential splitting. 
\end{lemma}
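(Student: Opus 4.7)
The plan is to directly produce the cover using the ``enough pure existential free objects'' hypothesis together with the graph functor, and then invoke Proposition~\ref{prop_ex_sp_implies_proj} and Lemma~\ref{lem_regular_epis_in_reg_P}.

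First I would unpack the hypothesis. For an object $(B,\beta)$ of $\regdoctrine{P}$, the assumption that $P$ has enough pure existential free objects (Definition~\ref{def enough pure existential free}) yields some object $A$ and a pure existential free element $\alpha_0\in P(B\times A)$ with $\beta=\exists_{\pr_B}(\alpha_0)$. Since pure existential free elements are stable under reindexing by definition, reindexing along the swap isomorphism $\sigma\colon A\times B\to B\times A$ gives a pure existential free element $\alpha:=P_\sigma(\alpha_0)\in P(A\times B)$. Because $\sigma$ is an iso, the trivial pullback square consisting of $\sigma$, $\pr_B$ on $A\times B$ and $\pr_B$ (i.e.\ the first projection) on $B\times A$ combined with BCC gives $\exists_{\pr_B}(\alpha)=\beta$. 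In particular $\alpha$ is pure existential splitting.

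Next I would construct the covering morphism as the graph of the second projection $\pr_B\colon A\times B\to B$. The required inequality $\alpha\leq P_{\pr_B}(\beta)$ is immediate from the unit $\alpha\leq P_{\pr_B}\exists_{\pr_B}(\alpha)=P_{\pr_B}(\beta)$, so applying the graph functor of Definition~\ref{def_graph_functor} (see Remark~\ref{remark_embedding_Gr_in_reg}) yields a well-defined morphism
\[
\phi:=G_{|_{P}}([\pr_B])\colon (A\times B,\alpha)\longrightarrow (B,\beta)
\]
in $\regdoctrine{P}$. By Remark~\ref{graph}, this morphism may be presented as $\phi=\exists_{\langle \id_{A\times B},\pr_B\rangle}(\alpha)\in P((A\times B)\times B)$.

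It remains to show that $\phi$ is a regular epimorphism and that $(A\times B,\alpha)$ is a regular projective. For the first, by Lemma~\ref{lem_regular_epis_in_reg_P} I must check that $\exists_{\pi_2}(\phi)=\beta$, where $\pi_2\colon (A\times B)\times B\to B$ is the projection onto the codomain. Using the compatibility of left adjoints with composition,
\[
\exists_{\pi_2}(\phi)=\exists_{\pi_2}\exists_{\langle \id_{A\times B},\pr_B\rangle}(\alpha)=\exists_{\pi_2\circ\langle \id_{A\times B},\pr_B\rangle}(\alpha)=\exists_{\pr_B}(\alpha)=\beta,
\]
so $\phi$ is regular epi. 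For the second, since $\alpha$ is pure existential splitting, Proposition~\ref{prop_ex_sp_implies_proj} gives that $(A\times B,\alpha)$ is a regular projective in $\regdoctrine{P}$. Hence $(B,\beta)$ is covered by this regular projective, as required.

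There is no substantial obstacle: the only delicate point is the bookkeeping of the order of factors (``$A\times B$'' versus ``$B\times A$''), which is handled by reindexing along the swap and invoking BCC for an iso, and the verification that the graph morphism of $\pr_B$ is the desired regular epi, which reduces to the identity $\exists_{\pi_2}\exists_{\langle \id,\pr_B\rangle}=\exists_{\pr_B}$.
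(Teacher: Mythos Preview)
Your proof is correct and follows essentially the same route as the paper: produce a pure existential free cover $\alpha$ of $\beta$ via the hypothesis, take the graph of the projection $\pr_B$ as the covering map, verify it is a regular epi by composing left adjoints, and invoke Proposition~\ref{prop_ex_sp_implies_proj} for projectivity. The only difference is that you are explicit about the swap $A\times B\cong B\times A$ (which the paper handles silently by renaming), so your argument is marginally more careful but not substantively different.
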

\begin{proof}
  By definition of doctrine with enough pure existential free elements, for any element $\beta$ of $P(B)$ there exists a pure existential free element $\alpha$ of $P(A\times B)$ such that $\beta=\exists_{{\pr_2}}(\alpha)$ (hence, $\freccia{(A\times B,\alpha)}{[\pr_2]}{(B,\beta)}$ is a well-defined arrow of $\Pred{P}$).  Thus, for every object $(B,\beta)$ of $\regdoctrine{P}$, we can define in $\regdoctrine{P}$  the arrow $$\freccia{(A\times B,\alpha)}{G([{\pr_{2}}])}{(B,\beta)}$$
 where $\freccia{\Pred{P}}{G}{\regdoctrine{P}}$ is the graph functor defined in Definition~\ref{def_graph_functor} (with respect to $P$ itself), and it  is  a regular epi  of  $\regdoctrine{P}$ since by Remark~\ref{graph} 
   $G([{\pr_{2}}])= \exists_{\angbr{\id_{A\times B}}{ \pr_{2}}}(\alpha)$, which  implies
  $\exists_{\pr_3} (G([{\pr_{2}}])) =\exists_{\pr_3} ((\exists_{\angbr{\id_{A\times B}}{ \pr_{2}}}(\alpha))=\exists_{\pr_2}(\alpha)=\beta$.

   Finally, since every pure existential free element is in particular a pure existential splitting element, by Proposition~\ref{prop_ex_sp_implies_proj} we conclude that  $(A\times B,\alpha)$ is a regular projective object of $\regdoctrine{P}$ and it covers $(B,\beta)$.
\end{proof}
\begin{lemma}\label{lem_RC_implies_subobject_of _reg_proj}
Let $\doctrine{\mC}{P}$ be an elementary and pure existential doctrine. If $P$ satisfies the rule of choice then every object of $\regdoctrine{P}$ is a subobject of a regular projective.
\end{lemma}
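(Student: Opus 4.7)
The plan is to exhibit, for each object $(A,\alpha)$ of $\regdoctrine{P}$, an explicit monomorphism into the regular projective $(A,\top_A)$. The object $(A,\top_A)$ will be regular projective because, by Remark~\ref{rem_RC_and top_ex_splitting}, the rule of choice is exactly the statement that $\top_A$ is a pure existential splitting for every $A$, so Proposition~\ref{prop_ex_sp_implies_proj} applies.

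The mono I would use is the canonical one coming from the graph functor, namely the element
\[ \phi \;:=\; \delta_A \wedge P_{\pr_1}(\alpha) \;\in\; P(A\times A), \]
regarded as an arrow $\freccia{(A,\alpha)}{\phi}{(A,\top_A)}$ in $\regdoctrine{P}$. First I would check that $\phi$ is a well-defined morphism in $\regdoctrine{P}$ in the sense of Remark~\ref{remark_arrows_reg_espliciti}: the bound $\phi\leq P_{\pr_1}(\alpha)\wedge P_{\pr_2}(\top_A)$ is immediate, entirety amounts to $\exists_{\pr_1}(\delta_A\wedge P_{\pr_1}(\alpha))=\alpha\wedge\exists_{\pr_1}(\delta_A)=\alpha$ by Frobenius reciprocity together with $\exists_{\pr_1}(\delta_A)=\top_A$, and functionality follows from the usual elementary manipulation of $\delta_A$.

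Next I would argue that $\phi$ is a monomorphism in $\regdoctrine{P}$. Since $\regdoctrine{P}$ is regular (Theorem~\ref{theorem maietti rosolini pasquali regular comp}), it suffices to show that the relation $\phi$ is ``injective'', i.e.\ satisfies
\[ P_{\angbr{\pr_1}{\pr_3}}(\phi)\wedge P_{\angbr{\pr_2}{\pr_3}}(\phi)\;\leq\; P_{\angbr{\pr_1}{\pr_2}}(\delta_A) \]
in $P(A\times A\times A)$. Unwinding, both conjuncts force the first and second coordinate to be equal to the third via $\delta_A$, and transitivity/symmetry of the fibred equality (i.e.\ the elementary structure) yields the required $P_{\angbr{\pr_1}{\pr_2}}(\delta_A)$. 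Combined with entirety and functionality this gives that $\phi$ is a mono; alternatively one may use Lemma~\ref{lemma rel funzionale iff pr rho mono}(4)-(5) applied to the comprehension of $\phi$ to obtain the same conclusion.

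The hardest (but still routine) step is the verification of the mono property of $\phi$, since the other ingredients --- the fact that $(A,\top_A)$ is regular projective under RC, and that the graph functor produces a well-defined arrow $(A,\alpha)\to(A,\top_A)$ --- are direct applications of results already established in the excerpt. With those in place, the chain $(A,\alpha)\hookrightarrow(A,\top_A)$ exhibits every object of $\regdoctrine{P}$ as a subobject of a regular projective, which is exactly the claim.
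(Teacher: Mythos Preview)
Your proof is correct and follows essentially the same route as the paper: both use that RC makes $\top_A$ a pure existential splitting, invoke Proposition~\ref{prop_ex_sp_implies_proj} to conclude $(A,\top_A)$ is regular projective, and then exhibit the arrow $\delta_A\wedge P_{\pr_1}(\alpha)$ (which is exactly $G_{|_{P}}([\id_A])=\exists_{\angbr{\id_A}{\id_A}}(\alpha)$) as a mono $(A,\alpha)\hookrightarrow(A,\top_A)$. The paper's version is terser because it appeals to the graph functor machinery of Remark~\ref{remark_embedding_Gr_in_reg} and Proposition~\ref{prop_can_embedding} rather than re-verifying entirety, functionality, and injectivity by hand, but the content is the same.
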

\begin{proof}
If $P$ satisfies the rule of choice, then we have that every top element $\top_A$ is a pure existential splitting element (see Remark~\ref{rem_RC_and top_ex_splitting}), and hence $(A,\top_A)$ is a regular projective of $\regdoctrine{P}$  by Proposition~\ref{prop_ex_sp_implies_proj}. Therefore, every object $(A,\alpha)$ is a subobject of $(A,\top)$ in $\regdoctrine{P}$ via $G([\id_A])$ with $\freccia{(A,\alpha)}{[\id_A]}{(A, \top_A)} $, i.e. $\exists_{\angbr{\id_A}{\id_A}}(\alpha)$.

\end{proof}

By employing the previous results we can prove our main theorem:
\begin{theorem}\label{thm_main_1}
Let $\doctrine{\mC}{P}$ be an elementary and pure existential doctrine. Then $P$ is the pure existential completion of an elementary subdoctrine $P'$ if and only if the functor $\freccia{\reglex{\Pred{P'}}}{G_{|_{P'}}^{\reg}}{\regdoctrine{P}}$ provides an equivalence $\regdoctrine{P}\equiv \reglex{\Pred{P'}}$.

\end{theorem}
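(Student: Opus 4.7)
The plan is to prove each implication by linking the pure-existential-completion structure of $P$ over $P'$ to the conditions under which $G_{|_{P'}}^{\reg}$ realizes $\regdoctrine{P}$ as the $\reg/\lex$-completion of $\Pred{P'}$, exploiting the characterization of pure existential completions in Theorem~\ref{theorem caract. gen. pure existential completion} together with the regular-completion machinery already developed (Proposition~\ref{prop_ex_sp_implies_proj} and Lemmas~\ref{lem_projective_cover}, \ref{lem_RC_implies_subobject_of _reg_proj}, \ref{lem_morph_from_ex_free_are_trakable}, \ref{lem_regular_epis_in_reg_P}).

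For the forward direction, assume $P\cong\purecompex{P'}$. By Theorem~\ref{theorem caract. gen. pure existential completion}, $P'$ is the pure existential cover of $P$: its elements are precisely the pure existential free objects of $P$, $P$ satisfies RC, and pure existential free objects are closed under finite meets. I will first verify that $G_{|_{P'}}\colon \Pred{P'}\to \regdoctrine{P}$ is fully faithful: faithfulness is in Proposition~\ref{prop_can_embedding}, while for fullness, given a morphism $\phi\colon (A,\alpha)\to (B,\beta)$ in $\regdoctrine{P}$ with $\alpha,\beta\in P'$, Lemma~\ref{lem_morph_from_ex_free_are_trakable} produces $f\colon A\to B$ with $\alpha=P_{\angbr{\id_A}{f}}(\phi)$ and $\alpha\leq P_f(\beta)$, from which $G_{|_{P'}}([f])=\exists_{\angbr{\id_A}{f}}(\alpha)\leq \phi$ and hence $G_{|_{P'}}([f])=\phi$ by Remark~\ref{remark_phi_is_exists_id_f_a}. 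Next I check that the essential image of $G_{|_{P'}}$ consists of regular projectives (Proposition~\ref{prop_ex_sp_implies_proj}), is closed under finite limits in $\regdoctrine{P}$ (since $G_{|_{P'}}$ preserves finite limits, which exist in $\Pred{P'}$ by Remark~\ref{rem_finite_limit_pred}, thanks to $P'$ being elementary and closed under $\wedge$ and reindexing), covers every object (Lemma~\ref{lem_projective_cover}), and every object embeds into an image object (Lemma~\ref{lem_RC_implies_subobject_of _reg_proj} applied to $\top_A\in P'(A)$). Combined with the universal property of $\reglex{\Pred{P'}}$ invoked in Proposition~\ref{prop_can_embedding}, these conditions force the induced regular functor $G_{|_{P'}}^{\reg}$ to be both fully faithful and essentially surjective, hence an equivalence.

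For the backward direction, assume $G_{|_{P'}}^{\reg}$ is an equivalence. I aim to show that $P'$ is a pure existential relative cover of $P$, which by Corollary~\ref{cor_rel_split_implies_cover} and Theorem~\ref{theorem caract. gen. pure existential completion} will yield $P\cong \purecompex{P'}$. For the relative splitting condition, given $\alpha\in P'(A)$ with $\alpha=\exists_{\pi_A}(\beta)$ for some $\beta\in P'(A\times B)$, Lemma~\ref{lem_regular_epis_in_reg_P} shows $G_{|_{P'}}([\pi_A])$ is a regular epi in $\regdoctrine{P}$; reflecting along the equivalence, $[\pi_A]$ is a regular epi in $\reglex{\Pred{P'}}$. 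Projectivity of $(A,\alpha)$ in $\reglex{\Pred{P'}}$ (as an object coming from $\Pred{P'}$) yields a splitting $[\sigma]=[\angbr{\sigma_1}{\sigma_2}]$. The identity $[\pi_A\circ\sigma]=[\id_A]$ in $\Pred{P'}$ gives $\alpha\leq P_{\angbr{\sigma_1}{\id_A}}(\delta_A)$, while $[\sigma]$ being a morphism in $\Pred{P'}$ gives $\alpha\leq P_{\angbr{\sigma_1}{\sigma_2}}(\beta)$. A standard Leibniz substitution — formally $P_{\pi_{12}}(\delta_A)\wedge P_{\pi_{13}}(\beta)\leq P_{\pi_{23}}(\beta)$ in $P(A\times A\times B)$ pulled back along $\langle \sigma_1,\id_A,\sigma_2\rangle$ — then yields $\alpha\leq P_{\angbr{\id_A}{\sigma_2}}(\beta)$, and the reverse inequality follows from $\alpha=\exists_{\pi_A}(\beta)$, so $h=\sigma_2$ witnesses the splitting. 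For the cover condition, given $\alpha\in P(A)$, I view $(A,\alpha)$ as a subobject of $(A,\top_A)$ in $\regdoctrine{P}$ with $(A,\top_A)$ lying in the essential image of $G_{|_{P'}}$ (as $\top_A\in P'(A)$). Under the equivalence, subobjects of the projective $(A,\top_A)$ in $\reglex{\Pred{P'}}$ are images of morphisms out of $\Pred{P'}$, so $\alpha=\exists_f(\beta')$ for some $\beta'\in P'(B)$ and $f\colon B\to A$; the standard formula $\exists_f(\beta')=\exists_{\pi_A}(P_{f\times \id_A}(\delta_A)\wedge P_{\pi_B}(\beta'))$ then exhibits $\alpha$ as covered by an element of $P'(B\times A)$, since $P'$ contains every $\delta_A$ and is closed under $\wedge$ and reindexing.

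The hardest step is essential surjectivity of $G_{|_{P'}}^{\reg}$ in the forward direction: the image $G_{|_{P'}}(\Pred{P'})$ need not exhaust the regular projectives of $\regdoctrine{P}$ (since $\Pred{P'}$ may fail to split all idempotents), so Carboni's characterization Theorem~\ref{carbonireg} cannot be applied to that subcategory directly. My approach circumvents this by presenting every object of $\regdoctrine{P}$ as the coequalizer of a pseudo-equivalence relation whose nodes lie in $G_{|_{P'}}(\Pred{P'})$, obtained by iteratively taking covers and kernel pairs, thereby exhibiting it in the essential image of $G_{|_{P'}}^{\reg}$. A secondary delicate point, in the backward direction, is the identification of subobjects of $(A,\top_A)$ in $\reglex{\Pred{P'}}$ with $\exists_f$-type expressions from $\Pred{P'}$, which relies on the standard description of subobjects of projectives in a $\reg/\lex$-completion.
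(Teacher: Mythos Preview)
Your proposal is correct and follows essentially the same strategy as the paper: in both directions you use the same auxiliary results (Proposition~\ref{prop_ex_sp_implies_proj}, Lemmas~\ref{lem_morph_from_ex_free_are_trakable}--\ref{lem_RC_implies_subobject_of _reg_proj}, and the characterization Theorem~\ref{theorem caract. gen. pure existential completion} via a pure existential relative cover). The one substantive difference is in the forward direction: the paper simply invokes Theorem~\ref{carbonireg} to conclude, whereas you correctly observe that the theorem as \emph{stated} speaks of the full subcategory of \emph{all} regular projectives and the image of $G_{|_{P'}}$ may be strictly smaller, and you therefore propose a direct essential-surjectivity argument via pseudo-equivalence relations. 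In effect the paper is tacitly using the standard generalization of Carboni's criterion to any full lex subcategory of projectives that covers and into which every object embeds, and your workaround amounts to reproving that generalization by hand; both are valid, but the paper's shortcut is the more economical route once one accepts the folklore extension. In the backward direction the two arguments coincide almost verbatim: the paper establishes the cover condition via the fibrewise identification $P(A)\cong\Sub_{\regdoctrine{P}}(A,\top_A)\cong\Sub_{\reglex{\Pred{P'}}}(A,\top_A)\cong\Psi_{\Pred{P'}}(A,\top_A)$, which is exactly your ``subobjects of a projective in $\reglex{\Pred{P'}}$ are images of morphisms from $\Pred{P'}$'', and the relative-splitting argument is the same lifting-along-a-regular-epi computation, done in $\regdoctrine{P}$ by the paper and reflected to $\reglex{\Pred{P'}}$ by you.
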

\begin{proof}
    $(\Rightarrow)$ Suppose that $P$ is the pure existential completion of an  elementary subdoctrine $P'$. To prove that $\freccia{\reglex{\Pred{P'}}}{G_{|_{P'}}^{\reg}}{\regdoctrine{P}}$ provides an equivalence $\regdoctrine{P}\equiv \reglex{\Pred{P'}}$, we will employ the characterization of the regular completion of a lex category
     as presented in \cite[Lem. 5.1]{SFEC} and recalled in Theorem~\ref{carbonireg}.

     In particular, we are going to show that  the image via ${G_{|_{P'}}}$ of $\Pred{P'}$ into $\regdoctrine{P}$ is a full subcategory of regular projectives, and that every object of $\regdoctrine{P}$ is covered (via a regular epi) by an object lying in the image of ${G_{|_{P'}}}$, and that every object of $\regdoctrine{P}$ is a subobject of an object lying in the image of ${G_{|_{P'}}}$.
    
Now, by Theorem~\ref{theorem caract. gen. pure existential completion} and Lemma~\ref{lemma_cover_P_and_ex_excomp}, we have that the pure existential free elements of $P$ are precisely the elements of $P'$ and of course, we have that every pure existential free is pure existential splitting.

Therefore, by Proposition~\ref{prop_ex_sp_implies_proj}, we have that every object $(A,\alpha)$ with $\alpha$ element of $P'(A)$ is a regular projective and, by Lemma~\ref{lem_projective_cover}, we can conclude that every object $(B,\beta)$ of $\regdoctrine{P}$ is covered by a regular projective object of the form $(A\times B,\alpha)$ with $\alpha$ pure existential free. 
Then, combining  Lemma~\ref{lem_RC_implies_subobject_of _reg_proj}  with the fact that every pure existential completion satisfies the rule of choice, we can conclude that every object  $(A,\alpha)$ of $\regdoctrine{P}$ is a subobject of a regular projective $(A,\top_A)$ (which is in the image of ${G_{|_{P'}}}$).
Finally,  the image of $\Pred{P'}$ via ${G_{|_{P'}}}$ into $\regdoctrine{P}$  is a lex full subcategory of $\regdoctrine{P}$ by Proposition \ref{prop_can_embedding} and Remark~\ref{rem_uniqueness_in_Pred}. 
By   the characterization of the regular completion in Theorem~\ref{carbonireg}, we conclude that $\freccia{\reglex{\Pred{P'}}}{G_{|_{P'}}^{\reg}}{\regdoctrine{P}}$  is an equivalence of categories.

   $(\Leftarrow)$ Let $\doctrine{\mC}{P'}$ be an elementary subdoctrine of $P$, and let us suppose the functor $\freccia{\reglex{\Pred{P'}}}{G_{|_{P'}}^{\reg}}{\regdoctrine{P}}$ provides an equivalence $\regdoctrine{P}\equiv \reglex{\Pred{P'}}$.

  It is immediate to observe that the fibre $\Sub_{\regdoctrine{P}}(A,\top_A)$ of the subobject doctrine of $\regdoctrine{P}$ is equivalent to  $P(A) $ 
  and that  the fibre $\Sub_{\regdoctrine{\Psi_{\Pred{P'}}}} ((A, \top_A),\id_A)$ of the subobject doctrine  of $\regdoctrine{\Psi_{\Pred{P'}}}$ is equivalent to  $\Psi_{\Pred{P'}}(A,\top_A) $. Hence, we conclude that $P(A)$   is equivalent to $\Psi_{\Pred{P'}} (A,\top_A)$ by the equivalence $\regdoctrine{P}\equiv \reglex{\Pred{P'}}$ induced by $G_{|_{P'}}^{\reg}$ (and by Example~\ref{ex_regular_comp_weak_sub}).

   Therefore, since from Example~\ref{example weak subobjects doctrine} we know that  every weak subobject doctrine is a full existential completion (and that every element of the fibre can be written as an existential quantifier of a top element), any $\gamma$ in $P(B)$ can be written as $\gamma=\exists_{f}(\alpha)= \exists_{{\pr_B}}(P_{f\times \id_B}(\delta_B)\wedge P_{{\pr_A}}(\alpha))$ for some $\alpha$ in $P'(A\times B)$ and some arrow $\freccia{A}{f}{B}$.

   Since $P'$ is elementary, then $P_{f\times \id_B}(\delta_B)\wedge P_{{\pr_A}}(\alpha)$ is an object of $P'$ and hence
  we conclude that every $\gamma$ can be written as $\gamma=\exists_{\pr_B}(\sigma)$ for some object $\sigma$ of $P'(A\times B) $.

    Now we show that every element $\alpha$ of $P'(A)$ is a pure existential splitting element of $P$ relative to $P'$
     (see Definition~\ref{definition pure ex splitting rel to P}).

    Suppose that $\alpha=\exists_{\pr_A}(\beta)$ in $P(A)$ with $\alpha$ element of $P'(A)$ and $\beta$  element of $P'(A\times B)$.
    Then, observe that  $\freccia{(A\times B, \beta) }{G_{|_{P'}}([\pr_A])}{(A,\alpha})$ 
    is a well defined arrow in $\Pred{P'}$ and that
     the arrow    $\freccia{(A\times B, \beta) }{G_{|_{P'}}([\pr_A])}{(A,\alpha})$ is a surjective epimorphisms and hence a regular epimorphism in $\regdoctrine{P}$ (as in the proof of Lemma~\ref{lem_projective_cover}).
    
     Since $(A,\alpha)$ is a regular projective, being in the image of ${G_{|_{P'}}^{\reg}}$ , there exists an arrow $\phi$ such that the diagram


  \begin{equation}\label{diag_regu_proj}
\begin{tikzcd}
	&& {(A\times B,\beta)} \\
	\\
	{(A,\alpha)} && {(A,\exists_{\pr_A}(\beta))}
	\arrow["{G_{|_{P'}}([\id_A])}"', from=3-1, to=3-3]
	\arrow["{G_{|_{P'}}([\pi_A])}", from=1-3, to=3-3]
	\arrow["\phi", dashed, from=3-1, to=1-3]
\end{tikzcd}
\end{equation}
commutes in $\regdoctrine{P}$. 

Then, by fullness of $G_{|_{P'}}^{\reg}$ since $\beta$ is an element of $P'$,  there exists a unique arrow $[\angbr{f_1}{f_2}]:(A,\alpha)\to (A\times B,\beta)$ of $\Pred{P'}$ such that $ \phi=G_{|_{P'}}([\angbr{f_1}{f_2}])$
and also $$\alpha\leq P_{\angbr{f_1}{f_2}}(\beta)$$
Hence we have that
  \begin{equation}\label{diag_regu_proj}
\begin{tikzcd}`
	&& {(A\times B,\beta)} \\
	\\
	{(A,\alpha)} && {(A,\exists_{\pr_A}(\beta))}
	\arrow["{G_{|_{P'}}([\id_A])}"', from=3-1, to=3-3]
	\arrow["{G_{|_{P'}}([\pi_1])}", from=1-3, to=3-3]
	\arrow["{G_{|_{P'}}([\angbr{f_1}{f_2}])}", dashed, from=3-1, to=1-3]
\end{tikzcd}
\end{equation}
commutes in $\regdoctrine{P}$ and by faithfulness of $G_{|_{P'}}$ we conclude
$$\alpha\leq P_{\angbr{f_1}{\id_A}}(\delta_A).$$
Combining this with $\alpha\leq P_{\angbr{f_1}{f_2}}(\beta)$  by the properties of equality we conclude 
$$\alpha\leq P_{\angbr{\id_A}{f_2}}(\beta).$$
This ends the proof that  any $\alpha$  of $P'$ is a pure existential splitting  element of $P$ relative to $P'$.  Furthermore, since any object of $P$ is (existentially) covered by an element of $P'$,  we get
that $P'$ is a pure  existential relative cover for $P$ and by Theorem~\ref{theorem caract. gen. pure existential completion} we finally conclude that $P$ is the pure existential completion of $P'$.

\end{proof}
Note that, by the universal property of the $\ex/\lex$-completion,  the graph functor  $\freccia{\reglex{\Pred{P'}}}{G_{|_{P'}}^{\reg}}{\regdoctrine{P}}$ extends to a functor $\freccia{\exlex{\Pred{P'}}}{G_{|_{P'}}^{\ex}}{\exdoctrine{P}}$:
\[\begin{tikzcd}
	\Pred{P'} & \reglex{\Pred{P'}} & \exlex{\Pred{P'}} \\
	& \regdoctrine{P} & \exdoctrine{P}.
	\arrow[hook, from=1-1, to=1-2]
	\arrow["G_{|_{P'}}^{\reg}"',from=1-2, to=2-2]
	\arrow[hook, from=2-2, to=2-3]
	\arrow[dashed,"G_{|_{P'}}^{\ex}",from=1-3, to=2-3]
	\arrow[hook, from=1-2, to=1-3]
\end{tikzcd}\]
Thus, we can extend our previous characterization to the exact completion of  elementary and pure existential doctrines as follows:
\begin{corollary}\label{cor_ex_comp_main_2}
    Let $\doctrine{\mC}{P}$ be an elementary and pure existential doctrine. Then, $P$ is the pure existential completion of an elementary sudoctrine $P'$ if and only if the functor $\freccia{\exlex{\Pred{P'}}}{G_{|_{P'}}^{\ex}}{\exdoctrine{P}}$ provides an equivalence $\exdoctrine{P}\equiv \exlex{\Pred{P'}}$. 
    
   \end{corollary}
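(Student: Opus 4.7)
The plan is to deduce the two directions separately from Theorem~\ref{thm_main_1}, using the Carboni--Vitale decomposition $\exlex{\mathcal{C}}\simeq\exreg{\reglex{\mathcal{C}}}$ of Theorem~\ref{carbonidec} together with the definition $\exdoctrine{P}=\exreg{\regdoctrine{P}}$. The forward direction is a direct consequence of 2-functoriality of $\exreg{-}$, while the reverse direction cannot just be ``applied in reverse'' (the 2-functor $\exreg{-}$ does not reflect equivalences in general), so it will be handled by mimicking the reverse direction of Theorem~\ref{thm_main_1} one level up.

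For $(\Rightarrow)$, assume $P$ is the pure existential completion of $P'$. By Theorem~\ref{thm_main_1} the functor $G_{|_{P'}}^{\reg}\colon \reglex{\Pred{P'}}\to\regdoctrine{P}$ is an equivalence of regular categories. Applying the 2-functor $\exreg{-}$ preserves this equivalence, and combining with Theorem~\ref{carbonidec} yields
\[\exdoctrine{P}\;=\;\exreg{\regdoctrine{P}}\;\simeq\;\exreg{\reglex{\Pred{P'}}}\;\simeq\;\exlex{\Pred{P'}}.\]
The composite equivalence is canonically identified with $G_{|_{P'}}^{\ex}$ via the 2-universal property of $\exlex{\Pred{P'}}$: both are exact functors extending the lex functor $\Pred{P'}\to\exdoctrine{P}$ induced by the graph, hence they coincide up to natural isomorphism.

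For $(\Leftarrow)$, assume $G_{|_{P'}}^{\ex}$ is an equivalence. I verify condition~(4) of Theorem~\ref{theorem caract. gen. pure existential completion}, namely that $P'$ is a pure existential relative cover of $P$. First, restricting the equivalence to subobject fibres at $(A,\top_A)\in\exdoctrine{P}$ (and correspondingly at $((A,\top_A),\id_A)\in\exlex{\Pred{P'}}$) yields an isomorphism $P(A)\simeq\Sub_{\exdoctrine{P}}(A,\top_A)\simeq\Sub_{\exlex{\Pred{P'}}}((A,\top_A),\id_A)\simeq\Psi_{\Pred{P'}}(A,\top_A)$, exactly as in the reverse direction of Theorem~\ref{thm_main_1}. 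Unwinding via elementarity of $P'$, every $\gamma\in P(A)$ can be written as $\gamma=\exists_{\pr_A}(\sigma)$ with $\sigma\in P'(A\times B)$. Second, given $\alpha\in P'(A)$ with $\alpha=\exists_{\pr_A}(\beta)$ for some $\beta\in P'(A\times B)$, the morphism $G_{|_{P'}}([\pr_A])\colon (A\times B,\beta)\to(A,\alpha)$ is a regular epi in $\exdoctrine{P}$ and corresponds under the quasi-inverse of $G_{|_{P'}}^{\ex}$ to $[\pr_A]$ in $\exlex{\Pred{P'}}$. Since $\Pred{P'}$ is the full subcategory of regular projectives in $\exlex{\Pred{P'}}$ (the classical $\ex/\lex$-characterisation), the object $(A,\alpha)$ is regular projective there, so $[\pr_A]$ splits. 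By full faithfulness of the embedding $\Pred{P'}\hookrightarrow\exlex{\Pred{P'}}$, the section is an arrow $[\angbr{\phi_1}{\phi_2}]\colon (A,\alpha)\to(A\times B,\beta)$ of $\Pred{P'}$. Unwinding the splitting condition and combining $\alpha\le P_{\angbr{\phi_1}{\id_A}}(\delta_A)$ with $\alpha\le P_{\angbr{\phi_1}{\phi_2}}(\beta)$ gives $\alpha\le P_{\angbr{\id_A}{\phi_2}}(\beta)$, proving that $\alpha$ is pure existential splitting of $P$ relative to $P'$. Theorem~\ref{theorem caract. gen. pure existential completion} then concludes that $P$ is the pure existential completion of $P'$.

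The main obstacle is the reverse direction, specifically ensuring in Step~2 that the splitting in $\exlex{\Pred{P'}}$ descends to an actual morphism of $\Pred{P'}$ from which the required witness $\phi_2\colon A\to B$ can be extracted; this is exactly the point where one exploits that $\Pred{P'}$ sits as the subcategory of regular projectives of $\exlex{\Pred{P'}}$ and that the inclusion is fully faithful. Once this is in place, the argument parallels step-by-step the corresponding part of Theorem~\ref{thm_main_1}.
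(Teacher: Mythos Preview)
Your forward direction is exactly the paper's argument: apply Theorem~\ref{thm_main_1}, then the 2-functor $\exreg{-}$, then the Carboni--Vitale decomposition of Theorem~\ref{carbonidec}.

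For the reverse direction the paper gives only the one-line justification ``combine Definition~\ref{def_exc_comp_doc}, Theorem~\ref{carbonidec} and Theorem~\ref{thm_main_1}'', without spelling out how the implication runs. You are right to flag that this is not automatic: $\exreg{-}$ does \emph{not} reflect equivalences (for any non-exact regular $\mathcal{A}$ the unit $\mathcal{A}\hookrightarrow\exreg{\mathcal{A}}$ is not an equivalence, yet becomes one after applying $\exreg{-}$), so one cannot simply run the forward argument backwards. Your solution---re-running the $(\Leftarrow)$ argument of Theorem~\ref{thm_main_1} inside $\exlex{\Pred{P'}}$ rather than $\reglex{\Pred{P'}}$---is correct and uses only standard facts: that subobject posets over objects of a regular category are unchanged in its $\ex/\reg$-completion (giving $P(A)\simeq\Psi_{\Pred{P'}}(A,\top_A)$ exactly as before), that objects of $\Pred{P'}$ are regular projective in $\exlex{\Pred{P'}}$, and that the embedding $\Pred{P'}\hookrightarrow\exlex{\Pred{P'}}$ is full and faithful, so the section of the split regular epi $[\pr_A]$ lives in $\Pred{P'}$ and can be unwound as in Theorem~\ref{thm_main_1}. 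The extraction of the witness $\phi_2$ from the splitting condition is the same computation with $\delta_A$ as in the paper's proof of Theorem~\ref{thm_main_1}.

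In short, your proof is correct and, on the $(\Leftarrow)$ side, genuinely more explicit than the paper's. What you gain is a self-contained argument that does not rely on any unstated reflection property of $\exreg{-}$; what the paper's terse proof buys is brevity, at the cost of leaving the reader to reconstruct exactly the argument you have written out.
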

   \begin{proof} 
    First observe that $G_{|_{P'}}^{\ex}$ is an equivalence if and only if $G_{|_{P'}}^{\reg}$ is an equivalence. This because, by definition, the restriction of $G_{|_{P'}}^{\ex}$ to $\reglex{\Pred{P'}}$ is precisely $G_{|_{P'}}^{\reg}$.
    Then, the result follows  by combining this fact with the definition of $\exdoctrine{P}$ (see Definition~\ref{def_exc_comp_doc}), the decomposition of the $\ex/\lex$-completion (see Theorem~\ref{carbonidec})  and  Theorem~\ref{thm_main_1}.\end{proof}

    The characterization of the regular and exact completions of doctrines  equipped with Hilbert's $\epsilon$-operators presented  in\cite[Thm. 6.2 (ii)]{TECH} can be seen now as a particular case of Theorem~\ref{thm_main_1} and Corollary~\ref{cor_ex_comp_main_2}. In fact, combining Theorem~\ref{ex_doctrines_with_Hilber_are_ex_comp} with these results we obtain the following corollary:
    
\begin{corollary}
Let $\doctrine{\mC}{P}$ be an elementary and pure existential doctrine. Then the following are equivalent:
\begin{itemize}
\item $P$ is equipped with Hilbert's $\epsilon$-operators;

\item the functor $\freccia{\reglex{\Pred{P}}}{{G}^{\reg}}{\regdoctrine{P}}$ provides an equivalence $\regdoctrine{P}\equiv \reglex{\Pred{P}}$;

\item the functor $\freccia{\exlex{\Pred{P}}}{G^{\ex}}{\exdoctrine{P}}$ provides an equivalence $\exdoctrine{P}\equiv \exlex{\Pred{P}}$. 
\end{itemize}

\end{corollary}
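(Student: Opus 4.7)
The plan is to obtain this corollary as a direct specialization of Theorem~\ref{thm_main_1} and Corollary~\ref{cor_ex_comp_main_2} to the case where the full elementary subdoctrine $P'$ is $P$ itself, combined with the characterization of Hilbert's $\epsilon$-operators supplied by Theorem~\ref{ex_doctrines_with_Hilber_are_ex_comp}.

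First I would observe that $P$ is trivially a full elementary subdoctrine of itself, so Theorem~\ref{thm_main_1} and Corollary~\ref{cor_ex_comp_main_2} both apply with $P':=P$. Under this identification, the graph functor $G_{|_P}\colon \Pred{P}\to\regdoctrine{P}$ coincides with the canonical embedding $G$ of the statement, and the regular (resp.\ exact) extension induced by the universal property of $\reglex{\Pred{P}}$ (resp.\ $\exlex{\Pred{P}}$) yields exactly $G^{\reg}$ (resp.\ $G^{\ex}$). So ``$G_{|_{P}}^{\reg}$ is an equivalence'' means precisely that $\regdoctrine{P}\equiv \reglex{\Pred{P}}$ via $G^{\reg}$, and similarly in the exact case.

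Next I would chain the results through condition (1). By Theorem~\ref{ex_doctrines_with_Hilber_are_ex_comp}, $P$ is equipped with Hilbert's $\epsilon$-operators if and only if $P$ is (equivalent to) the pure existential completion of itself, i.e.\ of $P':=P$. Applying Theorem~\ref{thm_main_1} with this choice of $P'$ immediately converts the latter statement into the statement that $G^{\reg}$ is an equivalence, proving (1)$\Leftrightarrow$(2). The equivalence (1)$\Leftrightarrow$(3) is obtained in exactly the same way, substituting Corollary~\ref{cor_ex_comp_main_2} for Theorem~\ref{thm_main_1}.

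Since all the substantive work has already been carried out, there is no genuine obstacle: the only step requiring any attention is verifying that, once we set $P':=P$, the functors named $G^{\reg}$ and $G^{\ex}$ in the statement literally coincide with the functors denoted $G_{|_{P'}}^{\reg}$ and $G_{|_{P'}}^{\ex}$ in Proposition~\ref{prop_can_embedding} and Proposition~\ref{prop_can_embedding2}. This is immediate from Definition~\ref{def_graph_functor}, since when $P'=P$ the restriction is trivial.
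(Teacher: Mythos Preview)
Your proposal is correct and follows exactly the paper's approach: the paper's proof consists of a single sentence invoking Theorem~\ref{thm_main_1} and Corollary~\ref{cor_ex_comp_main_2} together with Theorem~\ref{ex_doctrines_with_Hilber_are_ex_comp}, and you have spelled out the specialization $P':=P$ and the identification of $G^{\reg}$, $G^{\ex}$ with $G_{|_{P}}^{\reg}$, $G_{|_{P}}^{\ex}$ in more detail than the paper does.
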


\begin{proof} 
It follows from Theorem~\ref{thm_main_1} and Corollary~\ref{cor_ex_comp_main_2}, since $P$ is isomorphic to the pure existential completion of itself $P^{\exists}$ by Theorem~\ref{ex_doctrines_with_Hilber_are_ex_comp}.

\end{proof}

Another corollary of our main results regards the presentation of the syntactic category ${\mathcal{C}^{\mathsf{reg}}_{\theory_0}}$  and of  its effectivization  $\mathcal{E}_{\theory_0}$ associated to a theory $\theory_0$  of   the fragment with  true constant, binary conjunctions, equality and existential quantifiers of first-order Intuitionistic Logic  with no extra-logical axioms,  as defined in \cite{SAE} (see Examples~\ref{ex_regular_synt_cat} and~\ref{example_exact_comp_synt_doct}). 
\begin{corollary}
Let $\theory_0$ be a regular theory  in the sense of \cite[Sec. D1.3]{SAE}, i.e. a theory of    the fragment   $\lang_{=,\exists}$  of first-order Intuitionistic Logic and no extra-logical axioms on a generic signature.
    Let $\mathsf{H}_0$ be the Horn theory given by
    the corresponding fragment   $\lang_{=}$  with no extra-logical axioms on the same signature. The syntactic category  ${\mathcal{C}^{\mathsf{reg}}_{\theory_0}}$  of $\theory_0$ 
     is equivalent to  the $\reg/\lex$-completion  $\reglex{\Pred{\syntdoc_{=}^{\mathsf{H}_0}}}$ of  the category of predicates of the syntactic doctrine  $\syntdoc_{ =}^{\mathsf{H}_0}$ of $\mathsf{H}_0$.  Hence,  also  
    its effectivization   $\mathcal{E}_{\theory_0}$
    is  the $\ex/\lex$-completion $\exlex{\Pred{\syntdoc_{=}^{\mathsf{H}_0}}}$ of the category of predicates of $\syntdoc_{=}^{\mathsf{H}_0}$.
        \end{corollary}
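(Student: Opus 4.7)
The plan is to assemble this corollary directly from the three main ingredients already developed in the paper: the identification of the pure existential completion at the syntactic level, the identification of the syntactic category as a regular/exact completion of a doctrine, and the characterization theorem for regular/exact completions of pure existential completions.

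First, I would invoke Example~\ref{example pure ex completion synt doct}, which states that the elementary pure existential syntactic doctrine $\syntdoc_{=,\exists}^{\theory_0}$ is precisely the pure existential completion of the elementary Horn syntactic doctrine $\syntdoc_{=}^{\mathsf{H}_0}$, i.e.\ $\purecompex{(\syntdoc_{=}^{\mathsf{H}_0})} \equiv \syntdoc_{=,\exists}^{\theory_0}$. Both are built on the same base category $\cont$ of contexts and substitutions, and $\syntdoc_{=}^{\mathsf{H}_0}$ is a full elementary subdoctrine of $\syntdoc_{=,\exists}^{\theory_0}$ (its Horn fragment), so the hypotheses of Theorem~\ref{thm_main_1} and Corollary~\ref{cor_ex_comp_main_2} are met with $P := \syntdoc_{=,\exists}^{\theory_0}$ and $P' := \syntdoc_{=}^{\mathsf{H}_0}$.

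Next, by Example~\ref{ex_regular_synt_cat} we identify the regular syntactic category with the regular completion of the associated doctrine, namely $\mathcal{C}^{\mathsf{reg}}_{\theory_0} \equiv \regdoctrine{\syntdoc_{=,\exists}^{\theory_0}}$, and by Example~\ref{example_exact_comp_synt_doct} we similarly have $\mathcal{E}_{\theory_0} \equiv \exdoctrine{\syntdoc_{=,\exists}^{\theory_0}}$. Applying Theorem~\ref{thm_main_1} then yields the equivalence $\regdoctrine{\syntdoc_{=,\exists}^{\theory_0}} \equiv \reglex{\Pred{\syntdoc_{=}^{\mathsf{H}_0}}}$, and Corollary~\ref{cor_ex_comp_main_2} yields $\exdoctrine{\syntdoc_{=,\exists}^{\theory_0}} \equiv \exlex{\Pred{\syntdoc_{=}^{\mathsf{H}_0}}}$. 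Chaining these equivalences gives both claims.

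There is essentially no obstacle: the corollary is a specialization of the main results to a specific example, and the entire argument is a matter of invoking the right prior results. The only point requiring minor care is checking that $\syntdoc_{=}^{\mathsf{H}_0}$ really is a full elementary subdoctrine of $\syntdoc_{=,\exists}^{\theory_0}$ on the same base $\cont$; this is immediate since the Horn fragment embeds into the regular fragment as a full sub-Lindenbaum-Tarski algebra fibrewise, and the axioms of $\theory_0$ are exactly those of $\mathsf{H}_0$ plus the logical rules governing $\exists$, so no new provable Horn sequents are introduced. With that in hand the result follows by juxtaposing Examples~\ref{example pure ex completion synt doct},~\ref{ex_regular_synt_cat},~\ref{example_exact_comp_synt_doct} with Theorem~\ref{thm_main_1} and Corollary~\ref{cor_ex_comp_main_2}.
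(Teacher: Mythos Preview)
Your proposal is correct and follows essentially the same approach as the paper: invoke Example~\ref{example pure ex completion synt doct} to identify $\syntdoc_{=,\exists}^{\theory_0}$ as the pure existential completion of $\syntdoc_{=}^{\mathsf{H}_0}$, use Examples~\ref{ex_regular_synt_cat} and~\ref{example_exact_comp_synt_doct} to rewrite $\mathcal{C}^{\mathsf{reg}}_{\theory_0}$ and $\mathcal{E}_{\theory_0}$ as $\regdoctrine{\syntdoc_{=,\exists}^{\theory_0}}$ and $\exdoctrine{\syntdoc_{=,\exists}^{\theory_0}}$, and then apply Theorem~\ref{thm_main_1} and Corollary~\ref{cor_ex_comp_main_2}. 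Your additional remark verifying that $\syntdoc_{=}^{\mathsf{H}_0}$ is a full elementary subdoctrine on the same base is a reasonable sanity check, though it is already implicit in the cited example.
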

    \begin{proof}
    This follows from   Theorem~\ref{thm_main_1} and Corollary~\ref{cor_ex_comp_main_2} after recalling 
     from Examples~\ref{ex_regular_synt_cat} and~\ref{example_exact_comp_synt_doct} that ${\mathcal{C}^{\mathsf{reg}}_{\theory_0}}=\regdoctrine{\syntdoc_{=,\exists}^{\theory_0}}$ and that $\mathcal{E}_{\theory_0}=\exdoctrine{\syntdoc_{=,\exists}^{\theory_0}}$ and that $\syntdoc_{=,\exists}^{\mathsf{T}_0}$ is the pure existential completion
     of   $\syntdoc_{=}^{\mathsf{H}_0}$ as observed in Example \ref{example pure ex completion synt doct}.
    \end{proof}
    
    \begin{remark}\label{regularlogicwrongname}
    Despite the name \emph{regular theory} for a theory of  the fragment $(\top,\wedge,=,\exists)$-fragment of first-order Intuitionistic Logic   in~\cite[Sec. D1.3]{SAE}, the syntactic doctrine of such a  theory presented in Example~\ref{example regular fragment} does not coincide with the subobject doctrine of a regular category. Indeed, the fragment $(\top,\wedge,=,\exists)$-fragment of first-order Intuitionistic Logic   does not provide the internal language of regular categories,   which can be instead described by adopting a dependent type theory as that in \cite{MCBDTTCIPT} (a similar internal language is introduced there also for lex categories).
    The regular category  $\mC^{\mathsf{reg}}_{\theory}$ presented in \cite[pp. 849-850]{SAE} associated to such a dubbed regular theory (see also Example~\ref{ex_regular_synt_cat} ) is instead the regular completion $\regdoctrine{\syntdoc_{=,\exists}^{\theory}}$  of  the syntactic doctrine   $\doctrine{\cont}{\syntdoc_{=,\exists}^{\theory}}$.

    \end{remark}

Finally, a last relevant example of doctrines arising as pure existential completions is that of the so-called G\"odel hyperdoctrines presented in \cite{trotta23TCS}, arising in context of Dialectica interpretation. The original observation, for the more general case of fibrations, that a Dialectica fibration can be obtained combining the simple product and simple coproduct completions (i.e. the pure universal and pure existential completions in the case of doctrines) is due to P. Hofstra~\cite{hofstra2011}. 

By Corollary~\ref{cor_ex_comp_main_2}, we have that also every exact completion of a  G\"odel hyperdoctrine is an instance of the $\ex/\lex$-completion a category of predicates:

\begin{corollary}
Let $\doctrine{\mC}{P}$ be a  G\"odel hyperdoctrine (as defined in \cite{trotta23TCS}). Then we have the equivalences
\begin{itemize}
\item $\regdoctrine{P}\equiv \reglex{\Pred{P'}}$;
\item $\exdoctrine{P}\equiv \exlex{\Pred{P'}}$;
\end{itemize} 
 where $P'$ is the elementary subdoctrine of $P$ given by the pure existential free elements of $P$.
\end{corollary}
\section{A new description of Joyal's arithmetic universes}
 
Now, we apply our main results to the categorical setting of  \emph{Joyal's arithmetic universes} reported in \cite{JAULAP,vandijk2020godel}.  In \cite{JAULAP}
a more  general abstract notion of arithmetic universes in terms of list-arithmetic pretoposes is introduced.  Here, we provide a new description only for arithmetic universes in the sense of Joyal.
In the following, we refer to   \cite{JAULAP} for the  definition of \emph{predicates} on a \emph{Skolem theory} and of Joyal's arithmetic universes.

\begin{definition}
Let $\mS$ be a Skolem theory. The \emph{elementary doctrine of  \textbf{$\mS$-predicates}} is the functor $\doctrine{\mS}{\sR}$ sending an object $\mN^n$ into the poset $\sR(\mN^n)$ of predicates over $\mN^n$, namely the arrows $P: \mN^n \rightarrow \mN$ of the Skolem theory such that $P\cdot P=P$ where $\cdot $ is the multiplication of predicates (defined point-wise with the multiplication of natural numbers), and  where $P\leq Q$ is the point-wise order induced by natural numbers. The fibered equality  $\delta_N^n$ is defined via the equality of the Skolem theory, see \cite[Def.4.1]{JAULAP}.
\end{definition}

\begin{remark}
The category denoted by $\Pred{\mS}$ in \cite{JAULAP}  built by Joyal is a key inspiring example of  the category of predicates of an elementary doctrine introduced in \cite{TECH}. It was described in terms of free completions already in  \cite[Ex. 4.5]{UEC}. Using the language of doctrines, Joyal's category $\Pred{\mS}$  in \cite{JAULAP} is exactly the category of predicates  $\Pred{\sR}$ associated with the elementary doctrine $\doctrine{\mS}{\sR}$. 
\end{remark}

\begin{remark}
Let us  call $\mS_{in}^{\set}$   the embedding of the initial Skolem theory described in  \cite{JAULAP} within $\set$. Hence,  in this category every object is isomorphic to a finite product of $\mathsf{Nat}$ and the arrows of $\mS_{in}^{\set}$ are precisely \emph{the primitive recursive functions}. In this case, the fibres of the  elementary doctrine $\doctrine{(\mS_{in}^{\set})}{\mathsf{R}_{\mS_{in}^{\set}}}$ of $\mS_{in}^{\set}$-predicates can be equivalently be presented as follows:
\[\mathsf{R}_{\mS_{in}^{\set}}(\mathsf{Nat}^n)\simeq \{\freccia{\mathsf{Nat}^n}{f}{\mathsf{Nat}}\; | \; f\in \mS_{in}^{\set}(\mathsf{Nat}^n,\mathsf{Nat}) \mbox{ and } \forall m\in \mathsf{Nat}^n, \; f(m)=0 \mbox{ or } 1 \}\]
\end{remark}
In the following proposition we summarize some useful properties of the category $\Pred{\sR}$ associated with the doctrine $\doctrine{\mS}{\sR}$. We refer to \cite[Prop. 4.7]{JAULAP}  for more details.
\begin{proposition}\label{prop_epi_spli_arith_univ}
Given a Skolem theory $\mS$ and its elementary doctrine $\doctrine{\mS}{\sR}$ of $\mS$-predicates,  the category $\Pred{\sR}$ is regular and every regular epi splits.
\end{proposition}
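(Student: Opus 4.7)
The plan is to establish the two parts of the proposition separately: regularity of $\Pred{\sR}$, and the splitting of image-factorization epimorphisms, using at each step the specific structure of the Skolem theory $\mS$.

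First, finite limits in $\Pred{\sR}$ are automatic from Remark~\ref{rem_finite_limit_pred}, since $\sR$ is elementary. The main technical work is then to construct image factorizations and verify their pullback-stability. For a morphism $\freccia{(\mN^n,P)}{[f]}{(\mN^m,Q)}$ of $\Pred{\sR}$, I would build the image as a subobject $(\mN^m,Q')\hookrightarrow (\mN^m,Q)$ by exploiting pairing functions and primitive recursion in $\mS$ to encode a primitive recursive characteristic function $Q'$ representing the image of $P$ along $f$. The canonical factorization then arises as the epi $\freccia{(\mN^n,P)}{[e]}{(\mN^m,Q')}$ composed with the inclusion, and stability under pullback reduces to routine manipulation of primitive recursive predicates.

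The splitting of image-factorization epimorphisms encodes an internal ``rule of choice'' for primitive recursive relations. Given such an epi $\freccia{(\mN^n,P)}{[e]}{(\mN^m,Q')}$, the construction of $Q'$ above produces, for each $b$ with $Q'(b)=1$, a witness $a$ with $P(a)=1$ and $e(a)=b$ that is computable by a primitive recursive function of $b$. This primitive recursive witness-selection function yields the desired section $\freccia{(\mN^m,Q')}{[g]}{(\mN^n,P)}$, and the identity $[e]\circ [g]=[\id]$ holds on $(\mN^m,Q')$ by direct comparison of primitive recursive functions modulo the predicate $Q'$.

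The main obstacle I anticipate is precisely the image construction, since naively the image of a primitive recursive function on a primitive recursive domain is only recursively enumerable. The proof must therefore use the specific closure and witness-extraction properties of Skolem theories -- in particular the availability of bounded search together with G\"odel-style coding -- to obtain a primitive recursive presentation of the image, together with an attached primitive recursive witness selector. Once this is in place, both pullback-stability of image factorizations and the splitting of the epi part follow essentially by bookkeeping on primitive recursive functions and predicates.
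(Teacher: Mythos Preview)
The paper itself gives no proof of this proposition; it simply cites \cite{JAULAP}. So what follows compares your proposal against the argument in that reference.

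There is a genuine gap in your plan, and it is exactly the obstacle you flag but do not actually overcome. You propose to realise the image of $\freccia{(\mN^n,P)}{[f]}{(\mN^m,Q)}$ as a subobject $(\mN^m,Q')\hookrightarrow(\mN^m,Q)$ with $Q'$ a primitive recursive predicate on $\mN^m$. This cannot work in general: the set $\{\,b\in\mN^m \mid \exists a\,(P(a)\wedge f(a)=b)\,\}$ is only recursively enumerable, and indeed \emph{every} r.e.\ subset of $\mN$ arises this way (already with $P=\top$). No amount of bounded search or coding performed on the codomain will produce a primitive recursive $Q'$, because there is no bound available on the witness $a$ in terms of $b$.

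The construction in \cite{JAULAP} avoids this by presenting the image on the \emph{domain} side rather than the codomain side. One defines the primitive recursive predicate
\[
P'(a)\ :=\ P(a)\ \wedge\ \forall z<a.\ \neg\bigl(P(z)\wedge f(z)=f(a)\bigr),
\]
selecting the minimal representative of each fibre of $f$ restricted to $P$; the quantifier is bounded by $a$, so $P'$ is primitive recursive. Then $f$ restricted to $(\mN^n,P')$ is monic into $(\mN^m,Q)$, and the map $\freccia{(\mN^n,P)}{[e]}{(\mN^n,P')}$ given by the bounded minimisation $e(a)=(\mu z\le a)[P(z)\wedge f(z)=f(a)]$ is the epi part of the factorisation. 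Its section is simply the inclusion $(\mN^n,P')\hookrightarrow(\mN^n,P)$, since $e$ restricted to $P'$ is the identity. Pullback-stability then follows easily. So the missing idea is that the image must be carved out as a predicate on $\mN^n$ via minimal representatives of the kernel pair, not as a predicate on $\mN^m$; once you make this switch, the rest of your outline goes through.
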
 
Now recall  the construction of  Joyal's arithmetic universes from~\cite[Def. 4.8]{JAULAP}:
\begin{definition}
Given a Skolem theory $\mS$,  an  \textbf{arithmetic universe} in the sense of \textbf{Joyal}  is  the category $\exreg{\Pred{\sR}}$.
\end{definition}
Then,  combining Corollary~\ref{cor_ex_comp_main_2} with Proposition~\ref{prop_epi_spli_arith_univ} we obtain the following result:
  \begin{corollary}\label{cor_1_arith_univ}
Every arithmetic universe   $\exreg{\Pred{\sR}}$ in the sense of Joyal  on a Skolem theory $\mS$ is equivalent to the exact completion 
$\exdoctrine{\sR^{\exists}}$  of the pure existential completion $\sR^{\exists}$ of the elementary doctrine $\doctrine{\mS}{\sR}$ of $\mS$-predicates.
\end{corollary}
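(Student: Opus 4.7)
The plan is to obtain the equivalence $\exreg{\Pred{\sR}}\equiv \exdoctrine{\sR^{\exists}}$ as a direct chain of three equivalences, invoking in order the main Corollary~\ref{cor_ex_comp_main_2}, the Carboni--Vitale decomposition (Theorem~\ref{carbonidec}), and a collapsing argument for the inner regular completion based on Proposition~\ref{prop_epi_spli_arith_univ}. First, since the pure existential completion preserves the elementary structure and contains the generating doctrine as a full elementary subdoctrine on the same base category by construction, the hypothesis of Corollary~\ref{cor_ex_comp_main_2} applies to the pair $(\sR^{\exists},\sR)$, yielding the equivalence
\[
\exdoctrine{\sR^{\exists}}\equiv \exlex{\Pred{\sR}}.
\]

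Next, I would apply Theorem~\ref{carbonidec} to rewrite the right-hand side as $\exreg{\reglex{\Pred{\sR}}}$, so the remaining task is to prove that the $\reg/\lex$-completion of $\Pred{\sR}$ is trivial, namely $\reglex{\Pred{\sR}}\equiv \Pred{\sR}$. For this I would invoke Proposition~\ref{prop_epi_spli_arith_univ}: the category $\Pred{\sR}$ is regular and every epimorphism appearing in its image factorizations splits. In a regular category these are exactly the regular epimorphisms, hence every regular epi of $\Pred{\sR}$ is split and therefore every object of $\Pred{\sR}$ is a regular projective. In other words, the full subcategory $\mathcal{P}_{\Pred{\sR}}$ of regular projectives coincides with $\Pred{\sR}$ itself.

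Once this is in place, the three hypotheses of the Carboni characterization (Theorem~\ref{carbonireg}) become tautological: the full subcategory $\mathcal{P}_{\Pred{\sR}}=\Pred{\sR}$ is trivially closed under finite limits, each object is covered by the identity (so there are enough projectives), and every object embeds in itself as a projective. The theorem then gives $\reglex{\Pred{\sR}}\equiv \Pred{\sR}$, and concatenating the equivalences produces
\[
\exdoctrine{\sR^{\exists}}\equiv \exlex{\Pred{\sR}}\equiv \exreg{\reglex{\Pred{\sR}}}\equiv \exreg{\Pred{\sR}},
\]
the right-hand side being by definition Joyal's arithmetic universe associated to $\mS$.

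The only genuinely delicate step, and thus the main obstacle, is the identification "every object of $\Pred{\sR}$ is a regular projective"; it requires aligning the wording of Proposition~\ref{prop_epi_spli_arith_univ} with the lifting definition of regular projectives, which is immediate in a regular category since the epis of image factorizations there coincide with the regular epis. Everything else amounts to book-keeping and to invoking results already stated in the paper, so no further technical work is needed.
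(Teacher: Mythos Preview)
Your proof is correct and follows essentially the same approach as the paper: both combine Corollary~\ref{cor_ex_comp_main_2} with Proposition~\ref{prop_epi_spli_arith_univ} to collapse $\exlex{\Pred{\sR}}$ and $\exreg{\Pred{\sR}}$. The only difference is cosmetic: the paper asserts the equivalence $\exreg{\Pred{\sR}}\equiv\exlex{\Pred{\sR}}$ directly from the splitting property, whereas you unpack that step via the decomposition of Theorem~\ref{carbonidec} together with Theorem~\ref{carbonireg}; this is just a more explicit justification of the same fact.
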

\begin{proof}
In the category $\Pred{\sR}$ we have that regular epimorphisms split by Proposition~\ref{prop_epi_spli_arith_univ} and hence from  Remark~\ref{rem_exreg_coincides_exlex} we derive that $\exreg{\Pred{\sR}}\equiv \exlex{\Pred{\sR}}$. Finally, by applying Corollary~\ref{cor_ex_comp_main_2} we conclude that the arithmetic universe $\exreg{\Pred{\sR}}$  is equivalent to the exact completion of the pure existential completion $\sR^{\exists}$ of  the elementary doctrine $\sR$:
\[\exreg{\Pred{\sR}} \equiv \exdoctrine{\sR^{\exists}}.\]
\end{proof}

\begin{corollary}\label{cor_arith_univ_su_N}
 The initial arithmetic universe $\exreg{\Pred{\mathsf{R}_{\mS_{in}^{\set}}}}$  on the initial Skolem theory embedded in $\set$
  is equivalent to the exact completion $\exdoctrine{\mathsf{R}_{\mS_{in}^{\set}}^{\exists}}$ where the elements of the fibre $\mathsf{R}_{\mS_{in}^{\set}}^{\exists}(\mathsf{Nat})$  are exactly the recursive enumerable subsets of $\mathsf{Nat}$ in $\set$.
\end{corollary}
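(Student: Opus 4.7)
The plan is to deduce the corollary from Corollary~\ref{cor_1_arith_univ} together with a direct unfolding of the construction of the pure existential completion applied to the elementary doctrine $\doctrine{\mS_{\mathsf{Nat}}}{\sR}$ of primitive recursive predicates on the initial Skolem theory embedded in $\set$. The first half of the statement, namely the equivalence $\exreg{\Pred{\sR}}\equiv \exdoctrine{\sR^{\exists}}$, is immediate from Corollary~\ref{cor_1_arith_univ} once we notice that the assumptions there apply to this Skolem theory. Hence the only genuinely new content is the identification of the fibre $\sR^{\exists}(\mathsf{Nat})$ with the poset of recursively enumerable subsets of $\mathsf{Nat}$ (ordered by inclusion).

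For this identification, I would unfold the explicit construction of the pure existential completion recalled in Section~3. Any object of $\sR^{\exists}(\mathsf{Nat})$ is represented by a pair $(\mathsf{Nat}^{k},\alpha)$ with $k\geq 0$ and $\alpha\in \sR(\mathsf{Nat}\times \mathsf{Nat}^{k})$, i.e.\ a primitive recursive predicate on $\mathsf{Nat}^{k+1}$. Since we are working inside $\set$, the comparison preorder between two such representatives $(\mathsf{Nat}^k,\alpha)$ and $(\mathsf{Nat}^m,\beta)$ unfolds to the existence of a primitive recursive witness function $\mathsf{Nat}\times\mathsf{Nat}^k\to \mathsf{Nat}^m$ realizing the pointwise implication, and the induced partial order on equivalence classes coincides with inclusion of the subsets
\[S_{\alpha}\;:=\;\bigl\{\,n\in\mathsf{Nat}\;:\;\exists\, \vec{y}\in\mathsf{Nat}^{k}.\ \alpha(n,\vec{y})\,\bigr\}\;\subseteq\;\mathsf{Nat}.\]
This gives a well-defined order-preserving injection $\sR^{\exists}(\mathsf{Nat})\hookrightarrow \powerset(\mathsf{Nat})$ whose image is the set of those $S\subseteq\mathsf{Nat}$ that can be written as $S_{\alpha}$ for some primitive recursive $\alpha$.

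To pin down the image I would invoke the classical fact from computability theory that a subset $S\subseteq\mathsf{Nat}$ is recursively enumerable if and only if there is a primitive recursive predicate $\alpha\in \sR(\mathsf{Nat}^{2})$ with $S=\{n:\exists y.\,\alpha(n,y)\}$; one direction is Kleene's normal form theorem (every r.e.\ set is the projection of the primitive recursive Kleene T-predicate suitably instantiated), the other is straightforward since projections of primitive recursive predicates along $\mathsf{Nat}$ are $\Sigma^0_1$ and hence r.e. This identifies the image of $\sR^{\exists}(\mathsf{Nat})$ inside $\powerset(\mathsf{Nat})$ with the poset of r.e.\ subsets.

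The only point requiring a small check is that the number of existential quantifiers can be collapsed to one within the primitive recursive predicates themselves: given $\alpha\in\sR(\mathsf{Nat}^{k+1})$ one uses Cantor's primitive recursive pairing function $\mathsf{Nat}^{k}\cong \mathsf{Nat}$ to produce $\alpha'\in \sR(\mathsf{Nat}^{2})$ with $S_{\alpha}=S_{\alpha'}$, so $\sR^{\exists}(\mathsf{Nat})$ is no larger than the class of r.e. sets. This is a standard but unavoidable verification, and constitutes the only genuinely computability-theoretic step of the argument; everything else is formal unfolding of the constructions in Section~3 and of Corollary~\ref{cor_1_arith_univ}.
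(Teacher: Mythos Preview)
Your overall strategy matches the paper's: invoke Corollary~\ref{cor_1_arith_univ} for the equivalence $\exreg{\Pred{\sR}}\equiv\exdoctrine{\sR^{\exists}}$, and then argue that the elements of $\sR^{\exists}(\mathsf{Nat})$ correspond to the r.e.\ subsets via the classical fact that r.e.\ sets are exactly projections of primitive recursive predicates. The paper's proof is terser: it uses Theorem~\ref{theorem caract. gen. pure existential completion} to say every fibre element is $\exists_{\pr}(\alpha)$ for some primitive recursive $\alpha$ and then cites Soare for the identification with r.e.\ predicates, without unfolding the preorder explicitly.

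However, your claim that the assignment $[(\mathsf{Nat}^k,\alpha)]\mapsto S_\alpha$ is an \emph{injection}, and that the induced order on $\sR^{\exists}(\mathsf{Nat})$ coincides with set-theoretic inclusion, is false. Take $\alpha(n,y):=[y=0]$ and $\beta(n,z):=T(e,n,z)$, where $T$ is Kleene's primitive recursive predicate and $e$ is an index for the Ackermann function. Then $S_\alpha=S_\beta=\mathsf{Nat}$, yet $(\mathsf{Nat},\alpha)\leq(\mathsf{Nat},\beta)$ in the preorder would require a primitive recursive $w$ with $T(e,n,w(n,0))$ for all $n$, i.e.\ a primitive recursive upper bound on Ackermann's running time, which does not exist. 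Thus distinct classes in $\sR^{\exists}(\mathsf{Nat})$ can have the same extension $S_\alpha$, and the preorder is genuinely finer than inclusion. The paper avoids this trap by not asserting a poset isomorphism: its statement is only that the \emph{image} of the map $[\alpha]\mapsto S_\alpha$ consists precisely of the r.e.\ subsets, and this is exactly what your Kleene-normal-form argument (together with the pairing reduction) establishes. Drop the injectivity and order-reflection claims and your argument is correct and essentially the paper's.
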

\begin{proof}
By Corollary~\ref{cor_1_arith_univ} we have that
\[\exreg{\Pred{\mathsf{R}_{\mS_{in}^{\set}}}} \equiv \exdoctrine{\mathsf{R}_{\mS_{in}^{\set}}^{\exists}}.\]
Then, observe that the fibres of  the pure existential completion $\doctrine{(\mS_{in}^{\set})}{\mathsf{R}_{\mS_{in}^{\set}}^{\exists}}$ of the elementary doctrine $\doctrine{(\mS_{in}^{\set})}{\mathsf{R}_{\mS_{in}^{\set}}}$  are exactly the \emph{recursively enumerable predicate} because, by Theorem~\ref{theorem caract. gen. pure existential completion}, every element of $\mathsf{R}_{\mS_{in}^{\set}}^{\exists}(\mathsf{Nat})$ 
 can be written as an existential quantifier of a primitive recursive predicate, and it is well-known that every recursively enumerable predicate can be proved to be presented as an existential quantifier of a primitive recursively enumerable predicate, for example, from  \cite[Thm. II.1.8, Thm. I.3.3, Ex I.2.8]{Soare78}.
\end{proof}

\section{Conclusion}
We have provided a new description of Joyal's arithmetic universes \cite{JAULAP}  as an application of  a characterization of regular and  exact completions of  pure existential completions of elementary doctrines. This characterization  extends a previous one proved in \cite{TECH} for doctrines equipped with Hilbert's $\epsilon$-operators.

In particular, we have proved that for elementary doctrines arising as pure existential completions, their  regular and exact completions happen to be equivalent to the $\reg/\ex$ and $\ex/\lex$-completions, respectively, of the category of predicates associated with the subdoctrine of their pure existential free elements. To reach this goal, we took advantage of the intrinsic characterization of doctrines arising as pure existential completions presented in \cite{maietti_trotta}, slightly extended with another equivalent presentation here.

Using these results we have deduced that an arithmetic universe in the sense of Joyal can be seen  as  the  exact completion of the pure existential completion of the doctrine of predicates of its Skolem theory.  In particular,  the  initial arithmetic universe in the standard category of ZFC-sets turns out to be the completion with exact quotients of the doctrine of recursively enumerable predicates.

Other examples of application of our characterization include the  so called syntactic category  in \cite{SAE} associated to the so called regular fragment of first-order logic in \cite{SAE} (and its effectivization) and the regular and exact completion of a G\"odel hyperdoctrine~\cite{trotta23TCS,trotta2022}.

As future work, we aim to extend our results to regular and exact completions of other classes of doctrines obtained as generalized existential completions, including the case of the {\it full existential completion} of primary doctrines in the sense of \cite{maietti_trotta},  as initiated in \cite{maietti_trotta_arxiv},  with applications to sheaf theory.

\section*{Acknowledgements}

We acknowledge fruitful conversations with  Samuele Maschio, Manlio Valenti, Fabio Pasquali and Pino Rosolini on topics presented in this paper. 
Finally, we thank the anonymous referees for their valuable comments. The first author acknowledges to be a member of INdaM-Gnsaga.

\bibliographystyle{plain}
\bibliography{biblio_reg}

\end{document}